\numberwithin{table}{section}
\theoremstyle{plain}
\newtheorem{theorem}{Theorem}[section]
\newtheorem*{mthm}{Theorem 1}
\newtheorem{corollary}[theorem]{Corollary}
\newtheorem{lemma}[theorem]{Lemma}
\theoremstyle{definition}\newtheorem{construction}[theorem]{Construction}
\theoremstyle{definition}
\theoremstyle{definition} 
\theoremstyle{definition}\newtheorem{remark}[theorem]{Remark}   
\renewcommand{\leq}{\leqslant}
\renewcommand{\ge}{\geqslant}
\renewcommand{\le}{\leqslant}
\newcommand{\Aut}{\mathrm{Aut}}
\newcommand{\C}{\mathcal {C}}
\newcommand{\Char}{\textup{char}}
\newcommand{\E}{\mathbb{E}}
\newcommand{\eps}{\varepsilon}
\newcommand{\F}{\mathbb{F}}
\newcommand{\GAP}{{\sf GAP}}
\newcommand{\GL}{\mathrm{GL}}
\newcommand{\Magma}{\textsc{Magma}}
\newcommand{\OO}{\textup{O}}
\newcommand{\SL}{\mathrm{SL}}
\newcommand{\sym}{\textup{\textsf{S}}}
\newcommand{\Z}{\mathbb{Z}}
\newcommand{\ZZ}{\textup{\textup{Z}}}
\begin{document}

\title[Maximally symmetric $p$-groups]{Maximal linear groups induced on the\\
Frattini quotient of a \scalebox{1.0}{$p$}-group}
\author[J. Bamberg, S.~P. Glasby, L. Morgan, A.~C. Niemeyer]{John Bamberg, S.\,P. Glasby, Luke Morgan and Alice C. Niemeyer}

\address[Bamberg, Glasby\footnote{Also affiliated with The Department of Mathematics, University of Canberra, ACT 2601, Australia.}, Morgan]{Centre for Mathematics of Symmetry and Computation\\
University of Western Australia\\ 35 Stirling Highway\\ Crawley 6009,
Australia.\newline
Email: {\tt John.Bamberg@uwa.edu.au; WWW:
\href{http://www.maths.uwa.edu.au/~bamberg/}{http://www.maths.uwa.edu.au/$\sim$bamberg/}}\newline
Email: {\tt Stephen.Glasby@uwa.edu.au; WWW:
\href{http://www.maths.uwa.edu.au/~glasby/}{http://www.maths.uwa.edu.au/$\sim$glasby/}}
\newline
Email: {\tt Luke.Morgan@uwa.edu.au; WWW: \href{http://www.maths.uwa.edu.au/contact/staff}{http://www.maths.uwa.edu.au/contact/staff}}
}
\address[Niemeyer]{Lehrstuhl B f\"ur Mathematik, Lehr- und Forschungsgebiet Algebra, RWTH Aachen University,
Templergraben 64, 52062 Aachen, Germany.\newline
Email: {\tt Alice.Niemeyer@MathB.RWTH-Aachen.De;\newline
WWW: \href{https://www.mathb.rwth-aachen.de/Mitarbeiter/niemeyer.php}{https://www.mathb.rwth-aachen.de/Mitarbeiter/niemeyer.php}}
}

\date{\today}

\begin{abstract}
Let $p>3$ be a prime. For each maximal subgroup $H\le\GL(d,p)$ with $|H| \geqslant p^{3d+1}$, we
construct a $d$-generator finite $p$-group $G$ with the property that $\Aut(G)$ induces $H$ on the
Frattini quotient $G/\Phi(G)$ and $|G| \leqslant p^{\frac{d^4}{2}}$.  A significant feature of this
construction is that $|G|$ is very small compared to $|H|$, shedding new light upon a celebrated
result of Bryant and Kov\'acs.  The groups $G$ that we exhibit have exponent $p$, and of all such
groups $G$ with the desired action of $H$ on $G/\Phi(G)$, the construction yields groups with
\emph{smallest nilpotency class}, and in most cases, the \emph{smallest order}.
\end{abstract}

\dedicatory{Dedicated to the memory of our distinguished colleague L.G. (Laci) Kov\'acs}
\subjclass[2010]{20D45, 20D15, 20B25}

\maketitle

\section{Introduction}

The number of groups of prime power order is dauntingly large: Higman and Sims~\cites{gH,S} showed
that there are as many as $p^{2m^3(1+\OO(m^{-1/3}))/27}$ groups of order $p^m$.  This suggests that
properties of $p$-groups should be investigated statistically. Given a property of $p$-groups, one
may ask: What is the \emph{range} of possibilities? What is the \emph{frequency} distribution? What
are the \emph{mean} and \emph{variance}?

Some questions concerning `ranges' were considered in the 1970's. For example, one may ask which
groups can arise as the group induced by the automorphism group $\Aut(G)$ acting on $G/\ZZ(G)$, for
a $p$-group $G$.  Heineken and Liebeck~\cite{HL} showed that the range is as large as possible,
namely for any finite group $H$ and any prime $p>2$, there exists a $p$-group~$G$ of nilpotency
class~2, and exponent $p^2$ such that $\Aut(G)$ induces $H$ on ${G/\ZZ(G)}.$ Later this result was
generalised to $p=2$, see~\cites{aH, Webb}.  The group $G$ constructed in~\cite{HL} is a
$d$-generator $p$-group where $d=|H|\binom{k+2}{2}$ and $H$ is $k$-generated.  Soules and
Woldar~\cite{SW} reduce the number of generators of~$G$ to $d=|H|$ when $H$ is a sporadic simple
group. These examples have $|G|>p^{|H|}$ so it is unclear whether one sees such wild behaviour in
practical examples, or whether $|G|$ is always huge compared to $|H|$. Is wildness of theoretical
interest only?

A result addressing the \emph{frequency} is due to Helleloid and Martin. They show in \cite[Theorem
  3]{HM} that the group $A(G)$ induced on $G/\Phi(G)$ by the automorphism group of some
$d$-generator $p$-group~$G$, is `almost always' the trivial subgroup of $\GL(d,p)$.  In light of
this result, a natural question about \emph{ranges} is: Which subgroups $H\le\GL(d,p)$ are
conjugate\footnote{After a basis has been chosen for $G/\Phi(G)$, we may regard $A(G)$ as a subgroup
  of $\GL(d,p)$, we thus speak of conjugacy to mean `up to change of basis'. We will write $A(G)=H$
  to mean a basis may be chosen to effect this equality.}  to $A(G)$, for some $d$-generator
$p$-group~$G$?  Thus groups for which $A(G)$ is non-trivial are rare.  However, Bryant and
Kov\'acs~\cite{BK} prove a striking result: given \emph{any} $H\le\GL(d,p)$ where $d>1$, there
exists a $d$-generator $p$-group~$G$ such that $\Aut(G)$ induces on $G/\Phi(G)$ the linear
group~$H$.  An alternative proof of this celebrated result is given in \cite[Chapter~VIII,
  \S13]{HB2}.  Whilst the methods of the proof of~\cite[Theorem~1]{BK} are natural, utilising the
Lie ring associated to a $p$-group, the conclusion is not constructive: it is an existence result
bounding neither $|G|$, nor the nilpotency class of~$G$, nor the exponent of~$G$.

Inspired by the above results, given $H\le\GL(d,p)$, we ask: Is it possible to find relatively small
groups~$G$ (compared to $|H|$) satisfying $A(G)=H$?  For certain classes of $H$, of order at least
$p^{3d+1}$, we construct a $d$-generator finite $p$-group $G$ with the property that $A(G) = H$ and
$|G| \leqslant p^{ \frac{d^4}{2} }$.  Thus, our construction shows that `small' $p$-groups~$G$ with
$A(G)=H$ do in fact occur.  Our methods for constructing~$G$ from $H$ involve representation theory;
our constructions are geometric, and we believe, also very natural. We hope that they contribute to
a deeper understanding of automorphism groups of $p$-groups and their construction, as even the very
efficient algorithms \cite{ELOB} to compute $\Aut(G)$ struggle when $G$ is large, for example, when
$G$ is one of the groups we construct in Table ~\ref{Tab2}.  For more information on automorphism
groups of $p$-groups, we refer the reader to the survey of Helleloid \cite{H}.

To state our main result we require the following definition.  The \emph{lower exponent-$p$ central
  series}\footnote{Properties of this series are given in Huppert and Blackburn~\cite[16, Chapter
    VIII]{HB3}. However, their definition differs from ours as it starts with $X_1=X$.} for a
group~$X$ is defined inductively by $X_0=X$, $X_k=[X,X_{k-1}]X_{k-1}^p$ for $k\ge1$.  The smallest
integer $n$ for which $X_n=\{1\}$ (when it exists) is called the \emph{lower $p$-length} of $X$, and
we write~$n_p(X)=n$.  If $X$ is a group of exponent $p$, the lower $p$-length of $X$ is equal to the
\emph{nilpotency class} of $X$ (or \emph{class} for short).  With our numbering convention
($X_0=X$), we have $[X_i,X_j]\le X_{i+j+1}$ for all $i,j\ge0$. We alert the reader that for some
authors $X_i$ denotes the $(i+1)$st term of the lower central series for $X$.

\begin{mthm} 
\label{T:main}
Let $p >3$ be a prime, and let $d>1$ be an integer. Suppose that $H$ is a maximal subgroup of
$\GL(d,p)$ with $\SL(d,p)\not\le H$ and that $|H| \geqslant p^{3d+1}$.  Then there exists a
$d$-generator $p$-group~$G$ of exponent~$p$, class at most~$4$, order at most $p^{\frac{d^4}{2}}$
and such that $\Aut(G)$ induces $H$ on the Frattini quotient $G/\Phi(G)$.  The nilpotency class,
order and structure of~$G$ is given in Table~{\rm\ref{Tab2}}.
\end{mthm}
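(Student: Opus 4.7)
The plan is to realise $G$ as a carefully chosen quotient of the largest $d$-generator exponent-$p$ group $B$ of class $\le 4$. Setting $V := G/\Phi(G) \cong \F_p^d$, the hypothesis that $H$ is maximal in $\GL(d,p)$ and does not contain $\SL(d,p)$ places $H$ in one of the (geometric) Aschbacher families: reducible, imprimitive, field-extension, tensor-decomposable, tensor-induced, or classical. The proof will then be a case analysis according to this classification, with each row of Table~\ref{Tab2} recording the resulting~$G$.

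First I would set up the representation-theoretic infrastructure. Since $p>3$ exceeds the target class, the Lazard/BCH correspondence identifies $B$ with the free nilpotent Lie algebra of class $4$ on $d$ generators, and each factor $B_{k-1}/B_k$ becomes its degree-$k$ homogeneous component with its natural $\GL(d,p)$-action: $B_0/B_1=V$, $B_1/B_2\cong V\wedge V$, and $B_2/B_3$, $B_3/B_4$ are explicit quotients of $V^{\otimes 3}$ and $V^{\otimes 4}$ cut out by Jacobi relations. The hypothesis $p>3$ also ensures that the symmetric/exterior decompositions of $V^{\otimes k}$ for $k\le 4$ remain tractable, so the $H$-submodule lattice of each $B_{k-1}/B_k$ can be read off directly from the data defining $H$.

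Next, for each Aschbacher family I would single out proper $H$-invariant submodules $W_2\le B_1/B_2$, $W_3\le B_2/B_3$, and $W_4\le B_3/B_4$ built from the geometric data preserved by $H$ --- an invariant bilinear form when $H$ is classical, an invariant tensor decomposition in the tensor cases, a block system in the imprimitive case, and so on --- chosen so that the simultaneous $\GL(d,p)$-stabiliser of $(W_2,W_3,W_4)$ is exactly~$H$. The group $G$ is then defined as $B$ modulo the preimages of these submodules; it is by construction $d$-generated, of exponent $p$ and of class at most $4$, and the bound $|G|\le p^{d^4/2}$ follows from the Witt-type estimates $\dim(B_{k-1}/B_k)\le d^k/k$ summed over $k\le 4$.

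The inclusion $H\le A(G)$ is automatic, since every element of $H$ lifts to an automorphism of $B$ that preserves each $W_k$. The converse $A(G)\le H$ is the main obstacle: one must show that every $\alpha\in A(G)$ induces an element of $\GL(V)$ preserving each $W_k$ and therefore lying in the simultaneous stabiliser, which by construction equals~$H$. Here maximality of $H$ in $\GL(d,p)$ is essential to rule out strict overgroups, and the hypothesis $|H|\ge p^{3d+1}$ is what provides enough $p$-local structure inside $H$ to prevent $A(G)$ from collapsing onto a strictly larger geometric subgroup in the boundary cases of the classification. Verifying this stabiliser equality uniformly across the Aschbacher families, together with the arithmetic checks that produce Table~\ref{Tab2}, is where essentially all of the work lies.
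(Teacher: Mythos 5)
Your overall shape (quotient of the free exponent-$p$ class-$\le 4$ group, case analysis by Aschbacher class) matches the paper, but there are genuine gaps. First, the construction as you describe it does not work: if $W_2\le B_1/B_2$ is a \emph{nonzero} proper $H$-submodule and you quotient $B$ by its full preimage, that preimage already contains $B_2$, so the quotient has class at most $2$ and your conditions at levels $3$ and $4$ become vacuous; you cannot impose invariant submodules at several layers simultaneously in this way. The paper instead locates the \emph{first} $n\le 4$ for which $L^nV\cong B_{n-1}/B_n$ is $H$-reducible (this is the content of Lemmas~\ref{L:C1}--\ref{L:C4}, \ref{L:C8} and Theorem~\ref{T:C7}), picks a single maximal $H$-submodule $M/B_n<L^nV$, and sets $G=B/M$. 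Because $L^nV$ is shown in \S\ref{S3} to be $\GL(V)$-irreducible, $M/B_n$ is automatically not $\GL(V)$-invariant, so $H\le \mathrm N_{\GL(V)}(M/B_n)<\GL(V)$ and maximality of $H$ forces equality; you do not need to engineer submodules whose ``simultaneous stabiliser is exactly $H$'' case by case. Second, the step you flag as ``where essentially all of the work lies'' --- that $A(G)$ is exactly the stabiliser --- is not something one can leave open: it requires the lifting theorem for quotients of relatively free groups (Theorem~\ref{T:trick}, resting on Bryant--Kov\'acs/Huppert--Blackburn and Helleloid--Martin), which says $A(B/M)=\mathrm N_{\GL(d,p)}(M/B_n)$ whenever $B_n\le M<B_{n-1}$. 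Without this, neither inclusion of $A(G)=H$ is established; in particular the inclusion $A(G)\le H$ needs automorphisms of $G$ to act on $L^nV$ compatibly with their action on $V$, which is exactly what the citation provides.

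Two further points. The hypothesis $|H|\ge p^{3d+1}$ plays no role in ``preventing $A(G)$ from collapsing'': it is used once, via Liebeck's theorem (Corollary~\ref{Cor:T:L}), to force $H$ into the geometric classes $\C_1\cup\cdots\cup\C_5\cup\C_8$, after which the argument is Theorem~\ref{T:other theorem}. Finally, your order bound via $\dim(B_{k-1}/B_k)\le d^k/k$ is too crude: for $d=2$ (where class-$4$ examples genuinely occur, e.g.\ $\C_2$ and $\C_3$ with $r=d=2$) the sum $d+d^2/2+d^3/3+d^4/4$ exceeds $d^4/2$, and one needs the exact Witt dimensions $f(d,k)$ from \eqref{E1}, which give $2+1+2+3=8=2^4/2$ with equality.
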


\subsection{Strategy and outline of the paper}

We address the problem: given $H\le\GL(d,p)$ find~$G$ such that $A(G)=H$. To ensure that~$G$ is
interesting, we choose $H$ to be a \emph{maximal} subgroup of $\GL(d,p)$, and insist that $|G|$ is
\emph{minimal} subject to having exponent~$p$ (c.f.~Remark~\ref{rem:category}). To avoid
trivialities, we assume that $p>2$ (as 2-groups of exponent 2 are elementary abelian). In
Section~\ref{S4} we summarise the maximal subgroups of $\GL(d,p)$ that we consider and explain the
notation in Columns 1-4 of Table~\ref{Tab2}.

Our strategy for constructing $G$ is to examine the freest $d$-generator group $B$ of exponent
$p$. The quotient $\Gamma(d,p,n) = B/B_n$ (the quotient of $B$ by the $n$th term of its lower
central series) is the universal $d$-generator $p$-group of exponent $p$ and class $n$.  Our results
depend critically on a practical description of $\Gamma(d,p,n)$.  In \S\ref{S2} we
describe~$\Gamma(d,p,n)$ using a new data structure which we call \emph{Lie $n$-tuples}.  The
problem of constructing our desired group~$G$ is reduced in \S\ref{S2} to determining the
$H$-submodule structure of a certain Lie power $ L^nV$ of the natural $H$-module $V$, see
Theorem~\ref{T:trick}. In \S\ref{S3} we consider the irreducible submodules of Lie powers, keeping
the prerequisites to a minimum.  Aschbacher's classes $\C_i$ of maximal subgroups $H$ of $\GL(d,p)$
are listed in \S\ref{S4} before we determine class-by-class the $H$-submodule structure of $L^nV$ in
\S\ref{S5}. The proof of Theorem~\ref{T:main} appears in \S\ref{S7}, and we conclude in \S\ref{S8}
with some open questions and directions for future research.

\subsection*{Notation}

Throughout the paper, $V$ will denote a vector space of dimension $d$ over a (possibly infinite)
field $\F$. The precedence of the operators\footnote{The $n$th alternating, symmetric and tensor
  powers of $V$ are denoted $A^nV$, $S^nV$ and $T^nV$, respectively.} $A^n, S^n, T^n$ is greater
than $\otimes$ which is greater than $\oplus$.  For example, $A^nU \otimes V\oplus W$ means $((A^nU)
\otimes V)\oplus W$.

\section{Universal groups of exponent \texorpdfstring{$p$}{p}}\label{S2}

We fix integers $d$ and $n$ and a prime $p$. In this section, we discuss the universal group in the
category of finite $d$-generator $p$-groups of class $n$ and exponent $p$. First, we approach this
group from an abstract point of view, and later realise this group concretely. We set the following
notation:
\begin{itemize}
\item $F(d)$, the free group of rank $d$,
\item $B(d,p) = F(d)/F(d)^p$, the relatively free group of rank $d$ and exponent $p$,
\item $\Gamma(d,p,n) = B(d,p) / B(d,p)_n$, the relatively free group of rank $d$, exponent $p$ and class $n$.
\end{itemize}

Note that the group $\Gamma(d,p,n)$ is finite, having bounded rank, exponent and class. Moreover,
$\Gamma(d,p,n)$ is universal, in the sense that each finite $p$-group of rank $d$, exponent $p$ and
class $n$ is an image of $\Gamma(d,p,n)$. An explicit formula for the order for $\Gamma(d,p,n)$ was
given by Witt; to describe this formula we require some additional knowledge of Lie rings.
 
Higman describes in \cite{Higman} how to associate a graded Lie ring $L_{(N_i)}$ to a normal series
$G=N_1\trianglerighteqslant N_2 \trianglerighteqslant \cdots$ for a group $G$ provided $[N_i,N_j]
\le N_{i+j}$ and $\bigcap_{i=1}^\infty N_i=\{1\}$ hold.  The $N_i/N_{i+1}$ are abelian as $[N_i,N_i]
\le N_{2i}\le N_{i+1}$. We view the $N_i/N_{i+1}$ as \emph{additive} groups, and then form the
abelian group $L_{(N_i)}=\bigoplus_{i=1}^\infty N_i/N_{i+1}$. The following multiplication rule
$(g_i N_i)(g_j N_j) = [g_i,g_j] N_{i+j}$ turns $L_{(N_i)}$ into a graded Lie ring. The Hall-Witt
identity for $G$ (see~\cite{Tao}) gives rise to the Jacobi identity for $L_{(N_i)}$. The sections
$N_i/N_{i+1}$ are called \emph{homogeneous components} of the Lie ring.

Returning now to $F(d)$ and $B(d,p)$, both the lower central series\footnote{The \emph{lower central
    series} of a group $X$ is defined by $\gamma_1(X):=X$ and $\gamma_{i+1}(X):=[\gamma_i(X),X]$
  for~$i\ge1$.} of~$F(d)$ (taking $N_i=\gamma_i(F(d))$) and the lower exponent-$p$ central series of
$B(d,p)$ (taking $N_i=B(d,p)_{i-1}$) satisfy the conditions $[N_i,N_j] \le N_{i+j}$ and
$\bigcap_{i=1}^\infty N_i=\{1\}$. This gives two related Lie rings which we denote simply by
${\mathcal L}$ and $L$:
\begin{equation}\label{E:LL}
  \mathcal L:=L_{(\gamma_i(F))} =\bigoplus_{i=1}^\infty   {\mathcal L}^i             \quad               \textup{and}\quad L:=L_{(B_{i-1})}  =\bigoplus_{i=1}^\infty  L^i,
\end{equation}
where  ${\mathcal L}^k$ and $L^k$ are the $k$th homogeneous components of $\mathcal L$ and $L$,  respectively.

It turns out that ${\mathcal L}^k$ is a free abelian group, and $L^k$ is a vector space over the
prime field~$\F_p$. Witt \cite[Satz~3]{W2} gave formulas for the rank $f(d,k)$ of ${\mathcal L}^k$,
and dimension $f_p(d,k)$ of $L^k$. Indeed,
\begin{equation}\label{E1}
  {\mathcal L}^k\cong\Z^{f(d,k)}\quad\textup{where}\quad
  f(d,k)=\frac1k\sum_{i\mid k}\mu(i)d^{\frac ki},
\end{equation}
and $\mu$ is the number theoretic M\"obius function.
Also, by \cite[p.209\;(6p)]{W}, we have
\begin{equation}\label{E1p}
  L^k \cong (\F_p)^{f_p(d,k)}\quad\textup{where}\quad
  f_p(d,k)=\frac1k\sum_{i\mid k}\mu(i_0)\varphi(p^h)d^{\frac ki}
  \quad(i=i_0p^h, p\nmid i_0),
\end{equation}
and $\varphi$ is Euler's totient function.  Note that $f(d,k)=f_p(d,k)$ if $p > k$, and $F_{k-1}/F_k
= \bigoplus_{i=1}^k L^i$ by~\cite[Theorem~16]{HM}.  This is illustrated in Figure~\ref{F1}.

\begin{figure}[ht!]
  \caption{The lower central series of $F:=F(d)$ and the lower exponent-$p$ central series for $F$
    and $B:=B(d,p)$. The Lie algebras ${\mathcal L}$ and $L$ have the sections in the first and
    third chains.}  \vskip2mm
  \label{F1}
\begin{tikzpicture}[xscale=0.6, yscale=0.6]
  \draw[fill] (0,-1) circle [radius=0.04];
  \draw[fill] (0,-0.7) circle [radius=0.04];
  \draw[fill] (0,-0.4) circle [radius=0.04];
  \draw[fill] (0,0) circle [radius=0.1];
  \draw[fill] (0,1) circle [radius=0.1];
  \draw[fill] (0,2) circle [radius=0.1];
  \draw[fill] (0,3) circle [radius=0.1];
  \draw[line width=1pt,fill] (0,0) node[left,black] {$\gamma_4(F)$} -- (0,1);
  \draw[line width=1pt,fill] (0,1) node[left,black] {$\gamma_3(F)$} -- (0,2);
  \draw[line width=1pt,fill] (0,2) node[left,black] {$\gamma_2(F)$} --  (0,3) node[left,black] {$F=\gamma_1(F)$};
  \draw[thick] (0,2.5) node[right] {$\;\mathcal L^1=\Z^d $};
  \draw[thick] (0,1.5) node[right] {$\;\mathcal L^2=\Z^{(d^2-d)/2}$};
  \draw[thick] (0,0.5) node[right] {$\;\mathcal L^3=\Z^{(d^3-d)/3}$};
\end{tikzpicture}
\hspace{10mm}
\begin{tikzpicture}[xscale=0.6, yscale=0.6]
  \draw[fill] (0,-1) circle [radius=0.04];
  \draw[fill] (0,-0.7) circle [radius=0.04];
  \draw[fill] (0,-0.4) circle [radius=0.04];
  \draw[fill] (0,0) circle [radius=0.1];
  \draw[fill] (0,1) circle [radius=0.1];
  \draw[fill] (0,2) circle [radius=0.1];
  \draw[fill] (0,3) circle [radius=0.1];
  \draw[line width=1pt,fill] (0,0) node[left,black] {$F_3$} -- (0,1);
  \draw[line width=1pt,fill] (0,1) node[left,black] {$F_2$} -- (0,2);
  \draw[line width=1pt,fill] (0,2) node[left,black] {$F_1$} --  (0,3) node[left,black] {$F=F_0$};
  \draw[thick] (0,2.5) node[right] {$\;L^1$};
  \draw[thick] (0,1.5) node[right] {$\;L^1\oplus L^2$};
  \draw[thick] (0,0.5) node[right] {$\;L^1\oplus L^2\oplus L^3$};
\end{tikzpicture}
\hspace{10mm}
\begin{tikzpicture}[xscale=0.6, yscale=0.6]
  \draw[fill] (0,-1) circle [radius=0.04];
  \draw[fill] (0,-0.7) circle [radius=0.04];
  \draw[fill] (0,-0.4) circle [radius=0.04];
  \draw[fill] (0,0) circle [radius=0.1];
  \draw[fill] (0,1) circle [radius=0.1];
  \draw[fill] (0,2) circle [radius=0.1];
  \draw[fill] (0,3) circle [radius=0.1];
  \draw[line width=1pt,fill] (0,0) node[left,black] {$B_3$} -- (0,1);
  \draw[line width=1pt,fill] (0,1) node[left,black] {$B_2$} -- (0,2);
  \draw[line width=1pt,fill] (0,2) node[left,black] {$B_1$} --  (0,3) node[left,black] {$B=B_0$};
  \draw[thick] (0,2.5) node[right] {$\;L^1$};
  \draw[thick] (0,1.5) node[right] {$\;L^2$};
  \draw[thick] (0,0.5) node[right] {$\;L^3$};
\end{tikzpicture}
\end{figure}

Below we summarise the above discussion.

\begin{lemma}
\label{L3}
Suppose that $p>n$ and $1 \leq k\le n$. Then we have
\[
  | \gamma_k(\Gamma(d,p,n))/\gamma_{k+1}(\Gamma(d,p,n))| = p^{f(d,k)}
  \quad\textup{where $f(d,k)= \frac{1}{k}\sum_{i\mid k}\mu(i)d^ {\frac{k}{i}}$}.
\]
\end{lemma}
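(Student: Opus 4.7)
The plan is to reduce the statement to Witt's $p$-modular formula \eqref{E1p} for the dimension of the $k$th homogeneous component $L^k$ of the graded Lie ring $L$ attached to $B:=B(d,p)$, via two observations.

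First, I would show that for $1\le k\le n$ the quotient $\gamma_k(\Gamma(d,p,n))/\gamma_{k+1}(\Gamma(d,p,n))$ is canonically isomorphic to $L^k$. The key point is that $B$ has exponent $p$, so every subgroup $X\le B$ satisfies $X^p=1$; the defining recursion $B_k=[B,B_{k-1}]B_{k-1}^p$ therefore collapses to $B_k=[B,B_{k-1}]$, and an immediate induction with the base case $B_0=B=\gamma_1(B)$ gives $B_k=\gamma_{k+1}(B)$ for all $k\ge 0$. In particular $\Gamma(d,p,n)=B/B_n=B/\gamma_{n+1}(B)$, whence the correspondence theorem yields
\[
  \gamma_k(\Gamma(d,p,n))/\gamma_{k+1}(\Gamma(d,p,n))\cong \gamma_k(B)/\gamma_{k+1}(B)=B_{k-1}/B_k=L^k
\]
for $1\le k\le n$. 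By \eqref{E1p} this quotient has order $p^{f_p(d,k)}$.

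Second, I would verify that the hypothesis $p>n$ forces $f_p(d,k)=f(d,k)$ for $k\le n$. This is a short number-theoretic check: the sum in \eqref{E1p} ranges over divisors $i$ of $k$, and since $i\le k\le n<p$ no such $i$ is divisible by $p$; writing $i=i_0p^h$ with $p\nmid i_0$ therefore forces $h=0$ and $i_0=i$, so $\varphi(p^h)=1$ and each summand of \eqref{E1p} reduces to the corresponding summand of \eqref{E1}.

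There is no substantive obstacle here; the lemma is essentially a book-keeping translation between the paper's conventions. The only points requiring care are the unit index shift between $B_k$ and $\gamma_{k+1}(B)$ arising from the convention $B_0=B$, and the fact that Witt's $p$-modular Lie ring dimension formula degenerates to the characteristic-zero formula whenever $k$ lies below the prime.
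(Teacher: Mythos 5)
Your proposal is correct and follows essentially the same route as the paper, which states Lemma~\ref{L3} as a summary of the preceding discussion: the identification of the sections $B_{k-1}/B_k=\gamma_k(B)/\gamma_{k+1}(B)$ with the homogeneous components $L^k$ (as in Figure~\ref{F1}), Witt's formula \eqref{E1p}, and the remark that $f_p(d,k)=f(d,k)$ when $p>k$. Your write-up merely makes explicit the book-keeping (the collapse $B_k=\gamma_{k+1}(B)$ for the exponent-$p$ group $B$ and the passage to the quotient $\Gamma(d,p,n)=B/B_n$) that the paper leaves implicit, so there is nothing to correct.
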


Next we turn to the automorphism group of $\Gamma(d,p,n)$, and of certain quotients.

\begin{theorem}
\label{T:trick}
Let $B=B(d,p)$.  If $B_n\le M < B_{n-1}$ and $G=B/M$, then $A(G)=K$ where $K=\mathrm
N_{\GL(d,p)}(M/B_n)$, i.e., the group $A(G)$ of automorphisms induced by $\Aut(G)$ on $G/\Phi(G)$ is
$K$.  Furthermore, the nilpotency class of $G$ is $n$.
\end{theorem}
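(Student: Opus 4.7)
The plan is to identify $G/\Phi(G)$ with $V := B/B_1 \cong \F_p^d$, view $M/B_n$ as an $\F_p$-subspace of the central section $B_{n-1}/B_n$ of $\Gamma := \Gamma(d,p,n) = B/B_n$, and show that $A(G)$ coincides with the stabiliser of this subspace under the natural action of $\GL(V)$ on $B_{n-1}/B_n$. Since $n\ge 2$, the hypothesis forces $M \le B_{n-1} \le B_1 = \Phi(B)$, so $\Phi(G) = B_1/M$ and $G/\Phi(G) = V$, legitimising the identification $\GL(d,p) = \GL(V)$.

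The linchpin is that $\GL(V)$ acts naturally on every Lie section $L^k = B_{k-1}/B_k$. By universality of $\Gamma$ in the variety of groups of exponent $p$ and class $n$, every $g \in \GL(V)$ lifts to some $\tilde g \in \Aut(\Gamma)$, yielding a surjection $\pi \colon \Aut(\Gamma) \twoheadrightarrow \GL(V)$. I would then show, by induction on $k$ using $[uz, v] \equiv [u, v] \pmod{B_{k+1}}$ for $z \in B_k$, that any $\beta \in \ker \pi$---characterised by $\beta(x_i) \in x_i B_1$ on the generators---acts trivially on $L^k$. It follows that the action of $\Aut(\Gamma)$ on $B_{n-1}/B_n$ factors through $\GL(V)$, so that $K = \mathrm{N}_{\GL(d,p)}(M/B_n)$ is genuinely the set-wise stabiliser of the subspace $M/B_n$.

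For $A(G) \subseteq K$, take $\alpha \in \Aut(G)$ with induced element $\bar\alpha \in \GL(V)$, pick lifts $y_i \in B$ of $\alpha(x_iM)$, and use universality of $\Gamma$ to extend $x_iB_n \mapsto y_iB_n$ to an automorphism $\tilde\alpha \in \Aut(\Gamma)$ projecting to $\alpha$ and inducing $\bar\alpha$ on $V$. Since $\tilde\alpha$ covers $\alpha$, it stabilises the kernel $M/B_n$ of $\Gamma \twoheadrightarrow G$, forcing $\bar\alpha \in K$. The reverse inclusion $K \subseteq A(G)$ is dual: any lift $\tilde g \in \pi^{-1}(g)$ of $g \in K$ acts on $B_{n-1}/B_n$ as $g$ itself (by the linchpin), hence stabilises $M/B_n$, and therefore descends to an automorphism of $G = \Gamma/(M/B_n)$ that induces $g$ on $V$.

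For the class claim, a routine induction using $[A/M, C/M] = [A,C]M/M$ and $(A/M)^p = A^pM/M$ yields $G_k = B_kM/M$; hence $G_n = 1$ (as $B_n \le M$) while $G_{n-1} = B_{n-1}/M \neq 1$ (as $M < B_{n-1}$), so the lower $p$-length of $G$ is exactly $n$, which equals the nilpotency class because $G$ has exponent~$p$ as a quotient of $B$. The principal obstacle is really the linchpin step---cleanly showing that $\ker\pi$ acts trivially on every Lie section $L^k$, not merely on $V = L^1$---since this is what guarantees that the phrase $\mathrm{N}_{\GL(d,p)}(M/B_n)$ is even well-posed and that lifts of $g$ give consistent answers on $B_{n-1}/B_n$.
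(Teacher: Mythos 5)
Your argument is correct and follows essentially the same route as the paper, which simply outsources the two key ingredients to citations: the well-definedness of the $\GL(d,p)$-action on $B_{n-1}/B_n$ (your ``linchpin'', i.e.\ that the kernel of $\Aut(\Gamma)\to\GL(V)$ acts trivially on every section $B_{k-1}/B_k$) is \cite[Chapter~VIII, Lemma~13.3 and Theorem~13.4]{HB2}, and the lifting/descending argument identifying $A(G)$ with $\mathrm N_{\GL(d,p)}(M/B_n)$ is \cite[Theorem~13]{HM}. Your handling of the class claim via $G_k=B_kM/M$ matches the paper's one-line observation, so the proposal is a correct, self-contained rendering of the cited proof.
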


\begin{proof}
First, $B_n\le M < B_{n-1}$ implies $n_p(G)=n$ as $G_{n-1}=B_{n-1}/M$ is non-trivial.  Second, the
proof relies on the fact that $A(B) \cong \GL(d,p)$ induces a well-defined action on the elementary
abelian $p$-groups $B_{n-1}/B_n$, see~\cite[Chapter VIII, Lemma 13.3 and Theorem 13.4]{HB2}
and~\cite[\S2.2]{HM}.  For the remainder of the proof, see~\cite[Theorem~13]{HM}.
\end{proof}

In order to apply Theorem~\ref{T:trick}, it is useful to have a more explicit description of
$\Gamma(d,p,n)$. Construction~\ref{const:lie tuple} below achieves this and it relates the action of
automorphisms to linear actions in an explicit way.  Let $V=\F^d$ be a $d$-dimensional module over a
field of characteristic $p$. View $V$ as a $\GL(V)$-module, and consider the tensor algebra
$T(V)=\bigoplus_{n\ge0}T^nV$ where each $T^nV=V^{\otimes n}$ is a $\GL(V)$-module. For $u,v\in T(V)$
define
\begin{equation}
\label{wedge}
  [u,v]:=u\otimes v-v\otimes u,
\end{equation}
and let $L(V)$ be the closure of $V$  under this bracket operation. Then
$L(V)=\bigoplus_{n\ge1}L^nV$ is a free Lie $\F$-algebra by Witt's Theorem,
where $L^nV:=T^nV\cap L(V)$ is called the $n$-th \emph{Lie power} of $V$,
see~\cites{BK,J}.
Note that $L^1 V =V=T^1V$ and $[L^i V ,L^j V ]\subseteq L^{i+j} V $ for $i,j\geqslant 1$.

\begin{construction}[Lie $n$-tuples]
\label{const:lie tuple}
Let $V$ be a $d$-dimensional vector space over a field $\F$ of characteristic $p$, and assume that
$p>n$.  We set $$\Gamma_n(V):=\prod_{i=1}^n L^i V .$$ We write typical elements of $\Gamma_n(V)$ as
$g_n=(v_1,\dots,v_n)$, $g'_n=(v'_1,\dots,v'_n)$ and $g''_n=(v''_1,\dots,v''_n)$ where
$v_i,v'_i,v''_i\in L^i V $.  A binary operation $g_ng'_n=g''_n$ on $\Gamma_n(V)$ is a rule for
writing the $v''_k$ in terms of the $v'_j$ and $v_i$.

The operation for $\Gamma_1(V)=V$ is addition.
For $n=2, 3, 4$ it is defined as follows:
\begin{align}
  g_2g'_2&=(v_1+v'_1,v_2+v'_2+[v_1,v'_1]),\label{E2.1}\\
  g_3g'_3&=(v_1+v'_1,v_2+v'_2+[v_1,v'_1],v_3+v'_3
    +3[v_1, v_2'] + 3[v_2, v_1'] +[v_1, v_1', v_1' - v_1]),\label{E2.2}\\
  g_4g'_4&=(v_1+v'_1,v_2+v'_2+[v_1,v'_1],v_3+v'_3
    +3[v_1, v_2'] + 3[v_2, v_1'] +[v_1, v_1', v_1' - v_1],\label{E2.3}\\
        &\hspace{7mm}v_4+v_4'+[v_1, v_3'] +3[v_2,v_2']+[v_3, v_1'] 
        \nonumber\\
  &\hspace{11mm} +[v_2, v_1',v_1'-v_1] + [v_1, v_2',v_1'-v_1]+[v_1,v_1',v_2'-v_2] -  [v_1, v_1', v_1, v_1']).\nonumber
\end{align}
where for notational convenience, left-normed Lie brackets such as
$[[[v,v'],v''],v''']$ are abbreviated by $[v,v',v'',v''']$. \hfill $\lozenge$
\end{construction}

\begin{remark}\label{R:BCH}
When $n< p$, the Lazard correspondence applied to the finite nilpotent Lie ring $L(V)/
\bigoplus_{i>n}^\infty L^i V$ of class~$n$ gives a group of the same order and class which turns out
to be isomorphic to our $p$-group $\Gamma_n(V)$ when $n\le 4$. This observation allows us to deduce
a multiplication rule for $\Gamma_n(V)$ for $n>4$ from the Baker-Campbell-Hausdorff formula (see
~\cite{CdGVL} for a nice overview). The rules~\eqref{E2.1}--\eqref{E2.3} above allow us to do
practical computations with the automorphism group of $\Gamma_n(V)$, as will become apparent
below. For example, we identify the Lie elements $x=x_1+\frac12 x_2+\frac1{12}x_3+\frac1{24}x_4$ and
$y=y_1+\frac12 y_2+\frac1{12}y_3+\frac1{24}y_4$ with the group elements $(x_1,x_2,x_3,x_4)$ and
$(y_1,y_2,y_3,y_4)$ where $x_i,y_i\in L^i(V)$ and then we substitute $x,y$ into the left-normed BCH
formula for $z(x,y)$ where $e^xe^y=e^{z(x,y)}$ (\emph{c.f.} \cite[p.\,432]{CdGVL}):
\[
  z(x,y)=x+y+\frac12[x,y]-\frac1{12}[x,y,x]+\frac1{12}[x,y,y]-\frac1{24}[x,y,x,y]+\cdots.
\]
Expressing the answer in the form $z=z_1+\frac12 z_2+\frac1{12}z_3+\frac1{24}z_4$ by expanding
modulo $\bigoplus_{i>4}^\infty L^i V$ gives the rule~\eqref{E2.3}.
\end{remark}

\begin{theorem}\label{T1}
Let $V=\F^d$ be a $d$-dimensional space over $\F$. Then
\begin{enumerate}[{\rm (i)}]
  \item $\Gamma_2(V)$ is a group of order
    $|\F|^{d(d+1)/2}$, and class~$2$ when $\Char(\F)\ne2$.
  \item $\Gamma_3(V)$ is a group of order
    $|\F|^{d(d+1)(2d+1)/6}$, and class~$3$ when $\Char(\F)\ne2,3$.
  \item $\Gamma_4(V)$ is a group of order
    $|\F|^{d(d+1)(3d^2+d+2)/12}$, and class~$4$ when $\Char(\F)\ne2,3$.
  \item If $|\F|=p$, $p>n$ and $n\le4$, then
  $$ \Gamma(d,p,n) \cong \Gamma_n(\F_p^d).$$ In particular, $\Gamma_n(\F_p^d)$ has exponent~$p$ and class~$n$.
  \item For $n\leqslant 4$ there is a monomorphism $\alpha : \GL(V) \rightarrow \Aut(\Gamma_n(V))$ defined by $\alpha : g \mapsto \alpha_g$, where $\alpha_g$ is as follows:
$$(v_1,\dots,v_n)\alpha_g=(v_1g,\dots,v_ng).$$
  \item Suppose $p>n$, $n\le4$, and $V=\F_p^d$, then
  \[
     \Aut(\Gamma_n(V))=K \rtimes\GL(V),
  \]
  where $K$ is the kernel of the action of $\Aut(\Gamma_n(V))$ on the
  quotient $\Gamma_n(V)/\Phi(\Gamma_n(V))$.
\end{enumerate}
\end{theorem}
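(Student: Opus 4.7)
My plan is to deduce parts (i)--(iv) from the Lazard correspondence applied to the truncated free Lie algebra $\overline{L}(V):=L(V)/\bigoplus_{i>n}L^iV$, and to verify (v)--(vi) by direct inspection together with a Frattini-quotient argument. The principal obstacle, shared by any route, is bookkeeping: matching the explicit rules \eqref{E2.1}--\eqref{E2.3} against their Lie-algebraic source (or, equivalently, verifying associativity of \eqref{E2.3} head-on) becomes heavy at $n=4$.

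For (i)--(iii), the first step is to recognise that \eqref{E2.1}--\eqref{E2.3} really do define a group operation. Following Remark~\ref{R:BCH}, under the bijection there displayed, these operations coincide by construction with the Baker--Campbell--Hausdorff product on $\overline{L}(V)$, which is a legitimate group operation because $\overline{L}(V)$ is nilpotent of class $n\le 4$ and the denominators $2,3$ arising in BCH up to weight~$4$ are invertible in $\F$. This identifies $\Gamma_n(V)$ with $(\overline{L}(V),\star_{\mathrm{BCH}})$, whose cardinality is $\prod_{i=1}^n|L^iV|$. Summing the dimensions $f(d,k)$ from Witt's formula \eqref{E1} for $k=1,\dots,n$ and simplifying yields the stated orders, while the class of $\Gamma_n(V)$ equals the evident class $n$ of $\overline{L}(V)$.

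For (iv), with $V=\F_p^d$ and $p>n$, the algebra $\overline{L}(V)$ is the free nilpotent Lie $\F_p$-algebra of class $n$ on $d$ generators. The Lazard correspondence is a category equivalence between finite nilpotent Lie rings of class less than $p$ and finite $p$-groups of class less than $p$, and it preserves exponent; it therefore sends $\overline{L}(V)$ to the free $d$-generator $p$-group of exponent $p$ and class $n$, namely $\Gamma(d,p,n)$. Combined with the previous paragraph this gives $\Gamma_n(V)\cong\Gamma(d,p,n)$ together with the remaining exponent and class assertions.

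For (v), substituting $v_ig$ for $v_i$ in \eqref{E2.1}--\eqref{E2.3} and using that $g\in\GL(V)$ commutes with the tensor bracket \eqref{wedge} (hence with every iterated Lie bracket) gives $\alpha_g(g_ng'_n)=\alpha_g(g_n)\alpha_g(g'_n)$; multiplicativity $\alpha_{gh}=\alpha_g\alpha_h$ is immediate, and injectivity holds because $\alpha_g$ acts as $g$ on the first coordinate $L^1V=V$. For (vi), identify $\Gamma_n(V)/\Phi(\Gamma_n(V))$ with $L^1V=V$; then the composition $\GL(V)\xrightarrow{\alpha}\Aut(\Gamma_n(V))\to\GL(\Gamma_n(V)/\Phi)$ is the identity, so $K\cap\alpha(\GL(V))=1$, and any $\sigma\in\Aut(\Gamma_n(V))$ differs from $\alpha_{\bar\sigma}$ (where $\bar\sigma$ is its induced action on $V$) by an element of $K$. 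Hence $\Aut(\Gamma_n(V))=K\rtimes\GL(V)$ after identifying $\GL(V)$ with $\alpha(\GL(V))$.
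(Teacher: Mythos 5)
Your route establishes most of the theorem, but there is a concrete shortfall in parts (i)--(iii). Those parts assert that $\Gamma_3(V)$ and $\Gamma_4(V)$ are \emph{groups} of the stated orders over an \emph{arbitrary} field $\F$; the characteristic hypotheses in the statement restrict only the class claims. Your verification of the group axioms goes entirely through the identification of \eqref{E2.1}--\eqref{E2.3} with the BCH product on $\overline{L}(V)=L(V)/\bigoplus_{i>n}L^iV$, which requires the denominators $2,12,24$ (i.e.\ $2$ and $3$) to be invertible in $\F$. So your argument says nothing about associativity of \eqref{E2.2}--\eqref{E2.3} when $\Char(\F)\in\{2,3\}$ (nor about \eqref{E2.1} when $\Char(\F)=2$), which is precisely the point the paper flags (``associativity holds even when $\Char(\F)\le n$'') and settles by verifying the integer-coefficient rules directly (by a \Magma{} computation), an argument valid over $\Z$ and hence over every field. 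Your approach can be repaired: associativity of \eqref{E2.1}--\eqref{E2.3} is an equality of polynomial maps with \emph{integer} coefficients in the coordinates, so once it is known over $\mathbb{Q}$ (where the BCH identification applies) it holds identically over $\Z$ and therefore over every field; but some such step is needed and is missing. Note also that ``coincide by construction with BCH'' is exactly the weight-$4$ bookkeeping that Remark~\ref{R:BCH} delegates to machine computation; leaning on that remark is legitimate, but it is not cost-free, and it is the same computation you would need for a head-on associativity check.

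Apart from this, your proof tracks the paper closely, with one genuine divergence in (iv): you transfer freeness through the Lazard correspondence (``it therefore sends $\overline{L}(V)$ to the free $d$-generator exponent-$p$ group of class $n$''). That is true, but as stated it is an appeal to an unproved fact; it deserves the one-line justification that the correspondence identifies underlying sets and homomorphisms, so the universal property of the free nilpotent Lie $\F_p$-algebra transports to the group side. The paper instead argues by counting: $\Gamma_n(\F_p^d)$ is a $d$-generator group of exponent $p$ and class $n$, hence a quotient of $\Gamma(d,p,n)$, and Lemma~\ref{L3} (Witt's formula for $B(d,p)$) shows the two orders agree, forcing the isomorphism. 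Your (v) is the paper's argument, and your (vi) is the paper's argument carried out slightly more carefully (observing that the composite $\GL(V)\to\Aut(\Gamma_n(V))\to\GL(V)$ is the identity is exactly what produces the splitting $K\rtimes\GL(V)$).
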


\begin{proof}
(i)--(iii) The associative law $(g_ng'_n)g''_n=g_n(g'_ng''_n)$ follows from the Lazard
  correspondence when $\Char(\F)>n$.  It is noteworthy that associativity holds even when
  $\Char(F)\le n$. It holds for $n=1$ because $(v_1+v'_1)+v''_1=v_1+(v'_1+v''_1)$, and it holds for
  $n=2$ because $[\;,\;]$ is biadditive. Verifying associativity for $n=3, 4$ involves complicated
  (though technically simple) calculations. For this reason we delegated the task to a
  \Magma~\cite{BCP} computer program whose source can be found at~\cite{G}.  The identity element is
  easily seen to be the all zeroes vector, written $1=(0,\dots,0)$, and the inverse of $g_n$ is
  $g_n^{-1}=(-v_1,\dots,-v_n)$. This follows because $[v,0]=[0,v]=[v,-v]=0$. Hence $\Gamma_n(V)$ is
  a group for $n\le4$ and all vector spaces $V=\F^d$.

Properties of these groups depend on the characteristic of the field $\F$.  For example, it is easy
to see by induction on $k$ that $g_n^k=(kv_1,\dots,kv_n)$ for $k\in\Z$. Hence $\Gamma_n(V)$ has
exponent~$p$ if $\Char(\F)=p>0$, and is torsion-free otherwise. The following commutator
calculations are too long for most humans (when $n=3, 4$) and were done by the \Magma~\cite{BCP}
computer programs in~\cite{G}:
\begin{align}
  [g_2,g'_2]&=(0,2[v_1,v'_1]), \label{E3.1}\\
  [g_3,g'_3,g''_3]&=(0,0,12[v_1,v'_1,v''_1]),\label{E3.2}\\
  [g_4,g'_4,g''_4,g'''_4]&=(0,0,0,24[v_1,v'_1,v''_1,v'''_1]),\label{E3.3}
\end{align}
where for notational convenience, left-normed group commutators such as $[[[g,g'],g''],g''']$ are
abbreviated by $[g,g',g'',g''']$.

The order of $\Gamma_n(V)$ is $\prod_{i=1}^n|L^i V |$ and $|L^i V |=|\F|^{f(d,i)}$ by~\cite{W} where
$f(d,i)$ is given by~\eqref{E1}. Moreover, it follows from \eqref{E3.1}, \eqref{E3.2}, \eqref{E3.3}
that $\Gamma_n(V)$ has class~$n$ if $\Char(\F)\not\in\{2,\dots,n\}$. This proves parts (i)--(iii).

(iv) Suppose now that $\F=\F_p$, and consider part~(iv) for $n\le4$.  As $p>n$, Lemma~\ref{L3} shows
that $n_p(\Gamma(d,p,n))= n$ and $|\Gamma(d,p,n)|=\prod_{i=1}^n|L^i V |$. Thus it follows that
$\Gamma(d,p,n)\cong \Gamma_n( \F_p ^d)$, as desired.

(v) Each $g\in\GL(V)$ induces an action on $L^nV$.  A significant advantage of the
definitions~\eqref{E2.1}, \eqref{E2.2}, \eqref{E2.3} is that the map $\alpha_g$ is easily verified
to be an endomorphism of $\Gamma_n(V)$. In fact, $\alpha_g$ is an automorphism with inverse
$\alpha_{g^{-1}}$. Thus the map $\alpha\colon\GL(V)\to\Aut(\Gamma_n(V))$ with $\alpha(g)=\alpha_g$
is a monomorphism.

(vi) The action of $\Aut(\Gamma_n(V))$ on the Frattini quotient $\Gamma_n(V)/\Phi(\Gamma_n(V))\cong
V$ induces a homomorphism $\Aut(\Gamma_n(V))\to\GL(V)$, which is surjective by part (v). We have now
shown that $\GL(V)$ is a subgroup (and a quotient group) of $\Aut(\Gamma_n(V))$. Hence
$\Aut(\Gamma_n(V))$ splits as $\Aut(\Gamma_n(V))=K \rtimes\GL(d,p)$ for $n\le4$, with $K$ as in the
statement above. In fact, $K$ is a normal $p$-subgroup of $\Aut(\Gamma_n)$ by a theorem of Hall.
\end{proof}

\begin{remark}
  The constants appearing in the commutator relations given in \eqref{E3.1}, \eqref{E3.2} and
  \eqref{E3.3} are denominators appearing in the Baker-Campell-Hausdorff formula.  The connection is
  related to the Lazard correspondence as explained in Remark~\ref{R:BCH}.
\end{remark}

\begin{remark}\label{R}
One may guess that rules~\eqref{E2.1}--\eqref{E2.3} for multiplying Lie $n$-tuples do no more than
encode a pc-presentation\footnote{The abbreviation `pc' stands for `power-conjugate',
  `power-commutator' or `polycyclic', see \cite{EHOB}.} for $\Gamma_n(V)$.  This turns out
\emph{not} to be the case.  For example, consider a special group~$\Gamma_2(V)=G$ of
order~$p^{\binom{m}{1}+\binom{m}{2}}$ and exponent~$p>2$ where $V=(\F_p)^m$. Let $G$ have generators
$g_i$, $1\le i\le m$, and $h_{k,j}$, $1\le j<k\le m$, and define a pc-presentation for $G$ by
$g_i^p=h_{k,j}^p=1$, and $g_j^{g_k}=g_jh_{k,j}$ for $1\le j<k\le m$.  This pc-presentation gives
rise to the symbolic multiplication rule
\begin{equation}\label{E:pcsymb}
  \left(\prod_{i=1}^mg_i^{x_i}\prod_{j<k}h_{k,j}^{y_{k,j}}\right)
  \left(\prod_{i=1}^mg_i^{x'_i}\prod_{j<k}h_{k,j}^{y'_{k,j}}\right)=
  \prod_{i=1}^mg_i^{x_i+x'_i}\prod_{j<k}h_{k,j}^{y_{k,j}+y'_{k,j}+x_jx'_k}.
\end{equation}
Indeed when $m=1$, \emph{every} pc-presentation for~$G$ (with different composition series or
transversals) has the same rule.  It is much easier to prove that $\GL(V)$ is a subgroup of
$\Aut(G)$ using the more geometric `Lie' rule~\eqref{E2.1}, than using~\eqref{E:pcsymb}.  We return
to this point in Remark~\ref{R:extra}.
\end{remark}

\section{Some representation theory}\label{S3}

Bryant and Kov\'acs proved \cite[Theorem 1]{BK} by considering regular submodules of a certain sum
of Lie powers \cite[Theorem 2]{BK}. In this section, we consider the relevant Lie representation
theory for our results. A good introduction to this topic is~\cite{J}. As noted in \S\ref{S2}, the
action of $\GL(V)$ on $V$ induces an action on the tensor algebra $T(V)$, and on $L(V)$ (which is a
subset of $T^nV$ containing $V$, closed under the Lie bracket $[\;,\;]$).

Our aim in this section is to describe the $\GL(V)$-modules $L^i V $ for $1\leqslant i \leqslant 4$
and to show that they are irreducible. We note that the representation theory of $\GL(V)$ on $T^nV$
is known when $\Char(\F)=0$ (see \cite{FH}) and the irreducible $\GL(V)$-modules are described by
the representation theory of the symmetric group $\sym_n$ of degree $n$. We require the analogous
results when $\F$ is a finite field and $\Char(\F) > n$, which we have been unable to locate in the
literature.

The action of $g\in\GL(V)$ on the $n$th tensor power $T^nV=V^{\otimes n}$ is
\[
  (v_1\otimes\cdots\otimes v_n)g=(v_1g)\otimes\cdots\otimes (v_ng)
  \qquad\textup{where $v_1,\dots,v_n\in V$,}
\]
and  the following action of the symmetric group of degree $n$   commutes with that of $\GL(V)$:
\[
  (v_1\otimes\cdots\otimes v_n)\sigma=(v_{1\sigma^{-1}})\otimes\cdots\otimes (v_{n\sigma^{-1}})
  \qquad\textup{where $v_1,\dots,v_n\in V$, and $\sigma \in \sym_n$.}
\]

Suppose now that $\Char(\F)\not\in\{2,\dots,n\}$ so that $\sym_n$ acts completely reducibly on
$T^nV$. There exist primitive central orthogonal idempotents\footnote{This means $\sum_{i=1}^r
  e_i=1$, $e_i^2=e_i \in \ZZ(\F \sym_n)$ for $1\le i\le r$, and $e_ie_j=0$ for $1\le i<j\le r$.}
$e_1,\dots,e_r \in \F \sym_n$ which satisfy
\begin{equation} \label{decomp by sn}
T^nV = \bigoplus_{i=1}^r (T^nV)e_i.
\end{equation}
Since the actions of $\GL(V)$ and $\sym_n$ commute, this is a
$\GL(V)$-invariant decomposition of~$T^nV$. 
The primitive idempotents
\[
  e_1 = \frac{1}{n!}\left(\sum_{\sigma \in \sym_n} \sigma\right)\quad\textup{and}\quad
  e_2 = \frac{1}{n!}\left(\sum_{\sigma \in \sym_n} \mathrm{sgn}(\sigma)\sigma\right)
\]
give rise to the  \emph{symmetric} and \emph{alternating} powers
$S^nV$ and $A^nV$, respectively. For vectors $v_1, \dots, v_n \in V$ we define
$$v_1 \odot \dots \odot v_n = n! (v_1 \otimes \dots \otimes v_n)e_1 \quad \text{and} \quad v_1
\wedge \dots \wedge v_n = n!  (v_1 \otimes \dots \otimes v_n)e_2 .$$ The symmetric and alternating
powers are spanned by vectors of the form $v_1 \odot \dots \odot v_n$ and $v_1 \wedge \dots \wedge
v_n$ respectively, and their dimensions are
\begin{equation}\label{E:AS}
  \dim(S^nV)=\binom{d+n-1}{n}\quad\textup{and}\quad
  \dim(A^nV)=\binom{d}{n}.
\end{equation}

For the case $n=2$, this gives
\begin{equation}\label{E:A+S}
  T^2V=V\otimes V=A^2V\oplus S^2V\qquad\textup{if $\Char(\F)\ne2$.}			
\end{equation}

We now relate $L^nV$ for $n\le3$, to more familiar modules.  We have $L^1 V =V$ and $L^2 V =A^2V$
because $ v_1\wedge v_2= [v_1,v_2]$ (see \eqref{wedge}).  We warn the reader that $[v_1,v_2,v_3]\ne
v_1\wedge v_2\wedge v_3$; the left hand side term has four summands while the right hand side term
has six summands.

\begin{lemma}
\label{L4}
Suppose that   $\Char(\F)\ne2,3$. The following hold.
\begin{enumerate}[{\rm (i)}]
  \item If $d\ge3$, then $L^3 V =    X_1\oplus X_2$ is a sum of irreducible
  $H$-modules, where $H$ is the group $\GL(1,\F)\wr\sym_d$ of all
  monomial matrices, and $\dim(X_1)=2\binom{d}{2}$ and
  $\dim(X_2)=2\binom{d}{3}$.
  \item If $d>1$, then $L^3V$ is an irreducible $\GL(V)$-module.
  \item There are isomorphisms
  $A^2V \otimes V\cong L^3V \oplus A^3V$, and	
  $S^2V \otimes V\cong S^3V\oplus L^3 V $ of $\GL(V)$-modules. Hence
  $T^3V\cong S^3V\oplus L^3V\oplus L^3V \oplus A^3V$.
\end{enumerate}
\end{lemma}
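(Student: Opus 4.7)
The plan is to establish (iii) first by constructing $\GL(V)$-equivariant maps and invoking the Jacobi identity, then to prove (i) via the weight decomposition of $L^3V$ under the diagonal torus of $H$, and finally to deduce (ii) from (i). For part (iii), I would define $\phi_a \colon A^2V \otimes V \to L^3V$ by $(u \wedge v) \otimes w \mapsto [u,v,w]$ and $\psi_a \colon A^3V \to A^2V \otimes V$ by $u \wedge v \wedge w \mapsto (u \wedge v) \otimes w + (v \wedge w) \otimes u + (w \wedge u) \otimes v$: both are $\GL(V)$-equivariant; $\phi_a$ is surjective since $L^3V$ is spanned by left-normed brackets; $\psi_a$ is injective because one-third of the natural projection $(u \wedge v) \otimes w \mapsto u \wedge v \wedge w$ is a left inverse (using $\Char(\F) \ne 3$); and $\phi_a \psi_a = 0$ is the Jacobi identity. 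A dimension count then forces the sequence $0 \to A^3V \to A^2V \otimes V \to L^3V \to 0$ to split. The symmetric version uses analogous maps, with $\phi_s \colon S^2V \otimes V \to L^3V$ sending $(u \odot v) \otimes w$ to $[u,w,v] + [v,w,u]$; here $\phi_s \psi_s = 0$ splits the six resulting left-normed brackets into the two vanishing Jacobi triangles $[u,v,w]+[v,w,u]+[w,u,v]$ and $[u,w,v]+[w,v,u]+[v,u,w]$. Combining with $T^2V = S^2V \oplus A^2V$ from \eqref{E:A+S} then yields the decomposition of $T^3V$.

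For part (i), the diagonal torus $T = \GL(1,\F)^d \le H$ acts on $L^3V$ with only two types of nonzero weights: $2\epsilon_i + \epsilon_j$ (type $(2,1)$, $i \ne j$), with weight space the line $\F[e_i, e_j, e_i]$ (since $[e_i,e_i,e_j] = 0$); and $\epsilon_i + \epsilon_j + \epsilon_k$ (type $(1,1,1)$, distinct $i,j,k$), with $2$-dimensional weight space spanned by $[e_i,e_j,e_k]$ and $[e_j,e_k,e_i]$ modulo Jacobi. Setting $X_1, X_2$ as the sums of $(2,1)$- and $(1,1,1)$-weight spaces yields $\dim X_1 = 2\binom{d}{2}$ and $\dim X_2 = 2\binom{d}{3}$, exhausting $L^3V$ by Witt's formula $f(d,3) = (d^3-d)/3$. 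Because $\Char(\F) \ne 2,3$ forces $|\F^*| \ge 4$ over any prime subfield, the torus characters of these weight spaces are pairwise distinct, so every $H$-submodule is a sum of weight spaces. Since $\sym_d$ acts transitively on the ordered pairs $(i,j)$, $X_1$ is $H$-irreducible. For $X_2$, $\sym_d$ acts transitively on $3$-subsets of $\{1,\dots,d\}$, and the setwise stabiliser $\sym_3 \times \sym_{d-3}$ of $\{i,j,k\}$ acts on the corresponding $2$-dimensional weight space through the $\sym_3$ factor as the standard irreducible representation (verified by writing out the action of a transposition on the chosen basis), which is irreducible in characteristic $\ne 2,3$.

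For part (ii), let $Y$ be a nonzero $\GL(V)$-submodule of $L^3V$. If $d \ge 3$, then $Y$ is automatically $H$-invariant, so by (i) it equals one of $X_1, X_2, L^3V$. The transvection $g \in \GL(V)$ sending $e_1 \mapsto e_1 + e_3$ and fixing the other basis vectors satisfies $g \cdot [e_1,e_2,e_1] = [e_1,e_2,e_1] + [e_1,e_2,e_3] + [e_3,e_2,e_1] + [e_3,e_2,e_3]$, whose middle two summands lie in $X_2$; this rules out $Y = X_1$, and a symmetric transvection rules out $Y = X_2$, forcing $Y = L^3V$. For $d = 2$, part (iii) gives $L^3V \cong A^2V \otimes V$ (since $A^3V = 0$), which is $V$ twisted by the $1$-dimensional $\det$ character of $\GL(V)$, hence irreducible because $V$ is.

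The main obstacle is part (i), specifically verifying that the $2$-dimensional weight spaces inside $X_2$ carry the standard irreducible representation of $\sym_3$, a claim that fails precisely when $\Char(\F) \in \{2,3\}$, so the characteristic hypothesis enters the argument at this exact point. The maps in (iii) are essentially bookkeeping once one sees their Jacobi structure, and (ii) is a clean corollary of (i) once the $d=2$ edge case is handled via the twist-by-$\det$ observation.
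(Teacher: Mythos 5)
Your overall route is sound, and it differs from the paper's in mechanism. For (iii) the paper obtains the symmetric half from the central idempotents of $\F\sym_3$: it writes $T^3V=T_1\oplus T_2\oplus T_3$ with $T_1=S^3V$, $T_2=A^3V$, and identifies $T_3\cap(A^2V\otimes V)=L^3V$; you instead build explicit equivariant maps $\phi_a,\psi_a,\phi_s,\psi_s$ and split both sequences via the retractions $\tfrac13\times$(natural projection), which is more self-contained. For (i) the paper works upstairs in $A^2V\otimes V$ with the submodules $W_1$ and $W_2\supseteq A^3V$, separating the summands indexed by pairs and by $3$-subsets using the $\pm1$ diagonal matrices $a,b,c$ and then the $\sym_3$-module structure of each $U_\delta$; your torus-weight decomposition of $L^3V$ itself, plus the standard $\sym_3$-representation on each $(1,1,1)$ weight space, is the same idea in different packaging (the paper's separation step needs only $\Char(\F)\ne2$, while your character-distinctness needs $|\F^{\times}|\ge4$, which the hypothesis supplies). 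Part (ii) is essentially the paper's argument, with your transvection making explicit the paper's ``non-monomial matrix''; your $d=2$ case via $L^3V\cong A^2V\otimes V$, i.e.\ $V$ twisted by $\det$, replaces the paper's appeal to the $X_1$ argument.

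Several steps need tightening. (a) Surjectivity of $\phi_s$ is not ``analogous'' to that of $\phi_a$ and is nowhere verified; it genuinely uses the characteristic hypothesis. In the $(1,1,1)$ weight space with basis $x=[e_i,e_j,e_k]$, $y=[e_j,e_k,e_i]$, the image of $\phi_s$ contains $x+2y$ and $x-y$, which span only because $3\ne0$ in $\F$ (in characteristic $3$ the map is not onto). Without this check, your dimension count only gives $\dim(\im\phi_s)\le\dim(L^3V)$, so neither exactness in the middle nor the isomorphism $S^2V\otimes V\cong S^3V\oplus L^3V$ follows. (b) ``A dimension count then forces the sequence to split'' is not a valid justification; the splitting comes from your equivariant left inverses of $\psi_a,\psi_s$ via the splitting lemma, so invoke that instead. (c) ``Every $H$-submodule is a sum of weight spaces'' is literally false, since the torus acts by scalars on each $2$-dimensional weight space; the correct statement is that a submodule is the sum of its intersections with the weight spaces, and your $\sym_3$ standard-representation argument is precisely what upgrades a nonzero intersection to the full weight space --- phrase it that way. (d) In (ii), ``by (i) $Y$ equals one of $X_1,X_2,L^3V$'' tacitly needs $X_1\not\cong X_2$ as $H$-modules (note $\dim X_1=\dim X_2$ when $d=5$); this follows from the distinct torus characters, but should be said, and you should also record that the $X_2$-component $[e_1,e_2,e_3]+[e_3,e_2,e_1]=x-y$ of your transvection image is nonzero. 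All of these are fixable with a few lines, so the proposal stands as a correct alternative proof once they are supplied.
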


\begin{proof}
(i)~Suppose that $H$ preserves a decomposition $V=V_1\oplus\cdots\oplus V_d$ where the 1-dimensional
  subspaces $V_i = \langle v_i \rangle$ are permuted transitively.  Let $K:=G_1\times\cdots\times
  G_r$ be the base group of $H=\GL(V_1)\wr\sym_d$ where $G_i=\GL(V_i)$.  For $i,j,k$ there are three
  possibilities for the dimension of $V_i + V_j + V_k$, depending on the cardinality of the
  set~$\{i,j,k\}$.  For $A,B \subseteq V$ let $[A,B]:=\langle [a,b]\mid a\in A, b\in B\rangle$. Then
  $A^2V\otimes V$ has two obvious $H$-submodules:
\[
  W_1 =\sum_{i<j}\; [V_i,  V_j]\otimes (V_i+V_j),
  \qquad\textup{and}\qquad W_2
  =\sum_{k\not\in\{i,j\}} [V_i,V_j]\otimes V_k.
\]
It is clear that $A^2V\otimes V= W_1 + W_2$.  Since 
\[
  \dim(W_1)+\dim(W_2)\le 2\binom{d}{2}+ d\binom{d-1}{2}
  =d\binom{d}{2}=\dim(A^2V\otimes V),
\]
the inequality above is an equality and $A^2V\otimes V=W_1 \oplus W_2$ is an
$H$-module decomposition.  Now
\begin{equation}\label{EA3V}
  v_1\wedge v_2\wedge v_3
  =[v_1,v_2]\otimes v_3+[v_2,v_3]\otimes v_1+[v_3,v_1]\otimes v_2,
\end{equation}
so $W_2$ contains $A^3V$ and $\dim(W_2/A^3V)=d\binom{d-1}{2}-\binom{d}{3}=2\binom{d}{3}>0$ since
$d\ge3$.  We claim that $W_1$ and $W_2 / A^3 V$ are irreducible $H$-modules.

We may write each 2-dimensional subspace $[V_i,V_j]\otimes(V_i +V_j)$ of $W_1$ as the sum of two
1-dimensional $K$-invariant subspaces, which are isomorphic to $[V_i,V_j]\otimes V_i$ and
$[V_i,V_j]\otimes V_j$ respectively. Hence $W_1$ can be written as the sum of $2\binom{d}{2}$
1-dimensional subspaces that are pairwise non-isomorphic as $K$-modules. As these are permuted
transitively by $H$, we find that $W_1$ is an irreducible $H$-module.

For $W_2$, let $\Delta$ be the set of $3$-subsets of $\{1,\dots,d\}$.  For each $\delta=\{i,j,k\}$
in $\Delta$ define
\[
  U_\delta:= [V_i,V_j]\otimes V_k + [V_j,V_k]\otimes V_i + [V_k,V_i]\otimes V_j.
\]
Then $W_2 = \bigoplus_{\delta \in \Delta} U_\delta$.  The diagonal matrix
$t=(\alpha_1,\dots,\alpha_d) \in K$ acts on the 3-dimensional space $U_\delta$ as the scalar matrix
$\alpha_i\alpha_j\alpha_k I$.  Hence, if $\delta \neq \delta'$, then $U_\delta$ and $U_{\delta'}$
are non-isomorphic $K$-modules.  Let $M \cong\sym_3$ be the setwise stabiliser of $\delta$. As $M
\le\sym_d\le H$, we may view $U_\delta$ as an $M$-module.  Since $p>3$, $U_\delta$ is a sum of 1-
and 2-dimensional irreducible $M$-submodules. By~\eqref{EA3V} the 1-dimensional submodule is
\[
  A^3 V \cap U_\delta = \langle v_i \wedge v_j \wedge v_k\rangle
  =\langle\; [v_i,v_j]\otimes v_k + [v_j,v_k]\otimes v_i
  + [v_k,v_i]\otimes v_j \;\rangle.
\]
Now $A^3V$ is the direct sum of $\binom{d}{3}$ pairwise non-isomorphic 1-dimensional $K$-submodules,
one for each 3-set $\{i,j,k\}\in\Delta$.  These $K$-submodules are permuted transitively by
$\sym_d$, and so $A^3V$ is an irreducible $H$-module. Now suppose that $N$ is an $H$-submodule where
$A^3V< N \leq W_2$. Choose $x\in N\setminus A^3V$ and write $x= \sum_{\delta \in \Delta} u_\delta$
where $u_\delta\in U_\delta$.  Then there exists $\delta\in\Delta$ for which $u_\delta\not\in
A^3V$. In order to prove that $N=W_2$ it suffices to show that $U_\delta\le N$, as $\sym_d$ is
transitive on $\Delta$.

We claim that $u_\delta\in N$. Assuming the claim is true, then the $M$-submodule $U_\delta\cap N$
satisfies $U_\delta\cap A^3V< U_\delta\cap N\le U_\delta$ and by the above remarks, the only
$M$-submodule of $U_\delta$ properly containing the 1-dimensional submodule $U_\delta\cap A^3V$ is
$U_\delta$ itself. Hence $U_\delta\le N$ and $N=W_2$.

We now prove the claim. Because $\sym_d$ is transitive on $\Delta$, we may assume that $\delta =\{1,2,3\}$. Let
\[
  a:=(-1,1,1,\dots,1),\quad b:=(-1,-1,1,\dots,1),\quad
  c:=(-1,-1,-1,1,\dots,1)
\]
 be elements of $K$.  For $\delta' \in \Delta$, observe that if $1 \in \delta'$, then $u_{\delta'}a
 = -u_{\delta'}$ and if $1\notin \delta'$, then $u_{\delta'}a = u_{\delta'}$.  Thus
 $y:=\frac{1}{2}(x-xa) = \sum_{\delta'\in \Delta,1\in \delta'} u_{\delta'}$, and $y\in N$.  Now
 observe that if $1 \in \delta'$ and $2\notin \delta'$ then $u_{\delta'}b = -u_{\delta'}$, and if
 $\{1,2\}\subset \delta'$ then $u_{\delta'}b = u_{\delta'}$.  Setting $z:=\frac{1}{2}(y+yb)$, we
 have $z=\sum_{\delta' \in \Delta, \{1,2\} \subset \delta'} u_{\delta'}$ and $z\in N$.  Now for all
 $\delta'$ such that $\{1,2\} \subset \delta'$ we have $u_{\delta'}c = u_{\delta'}$ unless
 $\delta'=\{1,2,3\}$. Hence we obtain $u_{\{1,2,3\}} = \frac{1}{2}(z-zc)$.  Thus $u_{\{1,2,3\}} \in
 N \setminus A^3 V$, as desired.  In summary, we have shown that the only $H$-submodule of $W_2$
 properly containing $A^3V$ is $W_2$ itself.  Hence $W_2/A^3 V$ is indeed irreducible as an
 $H$-module. Thus $L^3V=(A^2V\otimes V)/A^3V=X_1\oplus X_2$, where $X_1\cong W_1$ and $X_2\cong
 W_2/A^3 V$ are irreducible.

(ii)~When $d=2$, part~(i) shows that $L^3 V =X_1$ is an irreducible $H$-module, and hence an
 irreducible $\GL(V)$-module.  When $d\ge3$, there is a non-monomial matrix in $\GL(V)$ which maps a
 non-zero element of $X_1$ into $X_2$. This proves that $L^3 V $ is an irreducible $\GL(V)$-module.

(iii) The map $\phi\colon A^2V \otimes V\to L^3V$ given by $\phi([u,v]\otimes w)=[[u,v],w]$ is a
 (well-defined) $\GL(V)$-module homomorphism. Furthermore, it follows from \eqref{EA3V} and the
 Jacobi identity in $L^3V$ that $A^3V\le\ker(\phi)$.  It is clear that $\phi$ is surjective.  We
 observe that
\[
  \dim\left(\frac{A^2V \otimes V}{A^3V}\right)=d\binom{d}{2} - \binom {d}{3}
  =\frac{d^3-d}{3}=\dim(L^3V)
\]
using \eqref{E1}, and hence $\ker(\phi)=A^3V$.

The group algebra $A:=\F\sym_3$ can be written as $A=Ae_1\oplus Ae_2\oplus Ae_3$
where $e_1, e_2, e_3$ are  primitive central orthogonal idempotents where
\[
  e_1=\frac16\sum_{\sigma\in\sym_3}\sigma,\quad e_2=\frac16\sum_{\sigma\in\sym_3}\textup{sign}(\sigma)\sigma,\quad\textup{and}\quad
   e_3=1-e_1-e_2.
\]
Then $T:=T^3V$ equals $TA$, and hence $T=T_1\oplus T_2\oplus T_3$, where
$T_i=Te_i$. However, $T_1=S^3V$ and $T_2=A^3V$, and 
\[
  T=T^2V\otimes V=(S^2V\oplus A^2V)\otimes V
  =(S^2V\otimes V)\oplus(A^2V\otimes V).
\]
By the previous paragraph, $A^2V\otimes V$ has two composition factors: $A^3V$ and $L^3V$. It
follows from the equation $T_1\oplus T_2\oplus T_3=(S^2V\otimes V)\oplus(A^2V\otimes V)$ that $T_3
\cap (A^2V\otimes V)=L^3V$. A similar argument shows that $(V\otimes A^2V)\cap T_3=L^3V$. However,
$A^2V\otimes V\cong V\otimes A^2V$ and $(A^2V\otimes V)\cap (V\otimes A^2V)=A^3V$. Thus
$T_3=L^3V\oplus L^3V$ and so $S^2V\otimes V=S^3V\oplus L^3V$ holds, as desired.
\end{proof}

Finally, we must understand the structure of $L^4V$ when $\dim(V)=2$.

\begin{lemma}
\label{L^4V}
Suppose that $d=2$ and $\Char(\F)\ne 2,3$.  Then $L^4V\cong A^2V\otimes S^2V$ is an irreducible
$\GL(V)$-module.
\end{lemma}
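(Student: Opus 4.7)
The plan is to construct a $\GL(V)$-equivariant map $\psi\colon A^2V\otimes S^2V\to L^4V$, show it is an isomorphism by computing on a basis after determining the weight-space structure of $L^4V$ under the split diagonal torus $T\le\GL(V)$, and then deduce irreducibility of $L^4V$ via a unipotent mixing argument. By~\eqref{E1}, $\dim L^4V=f(2,4)=\frac{1}{4}(16-4)=3$, matching $\dim(A^2V\otimes S^2V)=\binom{2}{2}\binom{3}{2}=3$.

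Fix a basis $\{x,y\}$ of $V$. Then $L^4V$ is spanned by left-normed brackets $[v_1,v_2,v_3,v_4]$ with $v_i\in\{x,y\}$; such a bracket vanishes when $v_1=v_2$, and using antisymmetry $[v_2,v_1]=-[v_1,v_2]$ we may assume $v_1=x$. The surviving $T$-weight vector candidates are $[x,y,x,x]$ at weight $(3,1)$, $[x,y,y,y]$ at weight $(1,3)$, and the pair $[x,y,x,y]$, $[x,y,y,x]$ at weight $(2,2)$, with no candidate at weights $(4,0)$ or $(0,4)$. The Jacobi identity applied to $[x,y]$, $y$, $x$ gives $[x,y,y,x]=[x,y,x,y]$, so the $(2,2)$-weight space is at most one-dimensional. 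Since $\dim L^4V=3$, each of the three weight spaces of weights $(3,1)$, $(2,2)$, $(1,3)$ is exactly one-dimensional, and $\{[x,y,x,x],[x,y,x,y],[x,y,y,y]\}$ is a basis of $L^4V$.

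Define
\[
\psi\colon A^2V\otimes S^2V\longrightarrow L^4V,\qquad [u,v]\otimes(w\odot z)\longmapsto[u,v,w,z]+[u,v,z,w].
\]
The right-hand side is antisymmetric in $(u,v)$ and symmetric in $(w,z)$, so $\psi$ is well-defined, and it is $\GL(V)$-equivariant because $\GL(V)$ acts on $L(V)$ by Lie-algebra automorphisms. Evaluated on the basis $\{[x,y]\otimes(x\odot x),\,[x,y]\otimes(x\odot y),\,[x,y]\otimes(y\odot y)\}$ of $A^2V\otimes S^2V$, the map $\psi$ returns $2[x,y,x,x]$, $2[x,y,x,y]$, $2[x,y,y,y]$ respectively (the middle value using $[x,y,y,x]=[x,y,x,y]$), which is twice the basis of $L^4V$ found above. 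Hence $\psi$ is a $\GL(V)$-module isomorphism.

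For irreducibility, any $\GL(V)$-submodule of $L^4V$ is $T$-invariant, hence a sum of weight spaces. The unipotent element $y\mapsto y+tx$ of $\GL(V)$ sends $[x,y,y,y]$ to $[x,y,y,y]+2t[x,y,x,y]+t^2[x,y,x,x]$. Since $\Char(\F)\ne 2,3$ forces $|\F|\ge 5$, taking three distinct values of $t\in\F$ and inverting the resulting Vandermonde matrix shows that the cyclic $\GL(V)$-submodule generated by any single weight vector contains all three weight spaces. Thus $L^4V$ is irreducible. The main obstacle is identifying the middle weight space as one-dimensional, which hinges on the single non-trivial Jacobi relation $[x,y,y,x]=[x,y,x,y]$.
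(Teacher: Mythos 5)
Your proof is correct, and while the isomorphism step follows essentially the same path as the paper's --- both rest on the spanning property of left-normed brackets, the Witt dimension count $\dim L^4V=3$, and the single Jacobi relation $[x,y,y,x]=[x,y,x,y]$ --- you diverge in two worthwhile ways. First, you define the map by the coordinate-free formula $[u,v]\otimes(w\odot z)\mapsto[u,v,w,z]+[u,v,z,w]$, so well-definedness and $\GL(V)$-equivariance are immediate from multilinearity and the fact that $\GL(V)$ acts on $L(V)$ by Lie algebra automorphisms; the paper instead prescribes its map $\phi$ on a basis (your $\psi$ is $2\phi$) and leaves equivariance to ``a direct calculation''. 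Second, for irreducibility the paper transports the question across the isomorphism: $A^2V$ is one-dimensional and $S^2V$ is an irreducible $\GL(V)$-module, so $A^2V\otimes S^2V$ is irreducible. You instead prove irreducibility of $L^4V$ directly via the torus weight decomposition and a transvection/Vandermonde mixing argument; this is longer, but it is self-contained and does not presuppose the irreducibility of $S^2V$, which the paper asserts without proof.

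Two small points would tighten the irreducibility step. Your explicit computation treats only the generator $[x,y,y,y]$, so ``generated by any single weight vector'' needs one more line: a submodule containing $[x,y,x,x]$ also contains its image $-[x,y,y,y]$ under the swap $x\leftrightarrow y$, and one containing $[x,y,x,y]$ contains $[x,y,x,y]g_t-[x,y,x,y]=t[x,y,x,x]$ for $g_t\colon y\mapsto y+tx$, reducing both cases to the one you computed. Also, the assertion that every submodule is a sum of weight spaces implicitly uses that the three torus characters $a^3b$, $a^2b^2$, $ab^3$ are pairwise distinct; this does hold, since $\Char(\F)\ne2,3$ guarantees the prime field contains an element $a$ of multiplicative order at least $4$, and then $\mathrm{diag}(a,1)$ has distinct eigenvalues $a^3,a^2,a$ on the three lines. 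Neither point is a genuine obstruction.
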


\begin{proof}
Fix a basis $\{e_1,e_2\}$ for $V$.  It is well-known that the left-normed vectors
$[v_1,v_2,v_3,v_4]:=[[[v_1,v_2],v_3],v_4]$ span $L^4V$. Indeed, $\{s_1,s_2,s_3\}$ is a basis for
$L^4V$ where
\[
  s_1=[e_1,e_2,e_1,e_1],\quad
  s_2=[e_1,e_2,e_1,e_2] \quad\textup{and}\quad
  s_3=[e_1,e_2,e_2,e_2].
\]
Note that $s_2=[ e_1,e_2 ,e_2 ,e_1]$.
Define the map
$\phi\colon  A^2V\otimes S^2V \to L^4V$ by:
\[
  \phi([e_1,e_2]\otimes (e_1\odot e_1))=s_1,\quad
  \phi([e_1,e_2]\otimes (e_1\odot e_2))=s_2,\quad
  \phi([e_1,e_2]\otimes (e_2\odot e_2))=s_3.
\]
Since $s_2 = [ e_1,e_2 ,e_2 ,e_1]$, $\phi$ is well-defined. It follows from the linearity of $[,]$
and the universality property of the exterior square, symmetric square and the tensor product, that
$\phi$ is a linear map. Since $\phi$ is surjective, and the dimensions of the respective spaces are
equal, we see that $\phi$ is an isomorphism. Moreover, a direct calculation shows that $\phi$ is a
$\GL(V)$-module isomorphism.  Since $S^2V$ is irreducible as a $\GL(V)$-module and $\dim(A^2V)=1$,
it follows that $L^4V$ is irreducible as a $\GL(V)$-module.
\end{proof}

\section{Aschbacher's Theorem}\label{S4}

An idea pervading Felix Klein's \emph{Erlanger Programm} is that there is a correspondence between
geometry and group theory. A group gives rise to a geometry, and `interesting' subgroups give rise
(via stabilisers) to `interesting' geometric substructures. Our group will be $\GL(d,q)$, where
$q=p^a$, and its `interesting' subgroups will be its \emph{maximal} subgroups $H$. A celebrated
result of Aschbacher relates maximal subgroups of the classical groups to geometry.  For
$\GL(V)\cong \GL(d,q)$, the geometric subgroups fall into eight classes of subgroups which we now
define:
\begin{itemize}
\item[$\C_1$] stabilisers of proper non-zero subspaces of $V$;
\item[$\C_2$] stabilisers of an equidimensional direct sum decomposition $V=V_1\oplus\cdots\oplus V_r$;
\item[$\C_3$] stabilisers of an extension field structure $\F_{q^r}$ where $r$ is prime;
\item[$\C_4$] stabilisers of an unequal dimensional tensor decomposition $V=V_1\otimes V_2$;
\item[$\C_5$] subgroups conjugate (modulo scalars) to a linear group over $\F_{q^{1/r}}$ where $r$ is prime;
\item[$\C_6$] normalisers of an $r$-subgroup of symplectic type where $r\neq p$ is prime;
\item[$\C_7$] stabilisers of an equidimensional tensor product decomposition $V=V_1\otimes\cdots\otimes V_r$;
\item[$\C_8$] stabilisers of non-degenerate forms on $V$.
\end{itemize}

The following statement of Aschbacher's
Theorem follows~\cite[Theorem 1.2.1]{KL}.
An alternative  form of the theorem is given in~\cite[\S3.10.3]{rW}.
 
\begin{theorem}[Aschbacher, \cite{aschbacher}]\label{Asch}
Let $q$ be a power of $p$ and suppose $H\leq\GL(d,q)$ and $\textup{SL}(d,q)\not\leq H$. Then
\begin{enumerate}[{\rm (i)}]
  \item $H$ is contained in a member of (at least one) of the classes $\C_1$~--~$\C_8$, or
  \item $H/\ZZ(H)$ is almost simple and
  $H$ acts absolutely irreducibly on the natural module for $\GL(d,q)$.
\end{enumerate}
\end{theorem}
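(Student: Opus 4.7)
The strategy is to analyse $H$ by progressively restricting its geometric structure on~$V$, eventually reducing to a study of the generalised Fitting subgroup $F^{*}(H)=E(H)F(H)$. First, if $H$ acts reducibly on $V$ then it stabilises a proper non-zero subspace and lies in~$\C_1$, so we may assume $H$ is irreducible. If $H$ preserves a non-trivial decomposition $V=V_1\oplus\cdots\oplus V_r$ with equidimensional summands transitively permuted by~$H$, then $H\in\C_2$; otherwise~$H$ acts primitively.

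Assuming primitivity, Schur's lemma shows that $\End_{\F_q H}(V)$ is a finite division $\F_q$-algebra, hence a field $\F_{q^r}$ for some $r\ge1$. If $r>1$, then $V$ carries a compatible $\F_{q^r}$-structure and $H$ embeds into $\GL(d/r,q^r)\rtimes\langle\sigma\rangle$ where $\sigma$ is a Galois automorphism, placing~$H$ in~$\C_3$; otherwise $H$ is absolutely irreducible. Next, if $H$ is conjugate modulo scalars to a subgroup defined over a proper subfield $\F_{q^{1/r}}$ we land in~$\C_5$, and if $H$ preserves a non-degenerate sesquilinear or quadratic form on~$V$ we land in~$\C_8$.

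Assuming none of $\C_1$, $\C_2$, $\C_3$, $\C_5$, $\C_8$ applies, one dissects $F^{*}(H)$. Absolute irreducibility forces $O_p(H)\le\ZZ(H)$, so the non-scalar part of $F(H)$ is a central product of $r$-groups for primes $r\ne p$; by a theorem of P.~Hall each such $r$-group is of symplectic type (a central product of an extraspecial $r$-group with a cyclic group), and its normaliser places $H$ in~$\C_6$. If instead the layer $E(H)$ is non-trivial, it is a central product of quasisimple components $L_1,\dots,L_t$ permuted by~$H$. Clifford-theoretic analysis of the restriction of~$V$ to~$E(H)$ then yields either a non-trivial tensor decomposition $V\cong V_1\otimes\cdots\otimes V_t$ — landing in~$\C_7$ when the $V_i$ are equidimensional and in~$\C_4$ otherwise — or $E(H)$ is quasisimple, in which case $H/\ZZ(H)$ is almost simple and absolutely irreducible on~$V$, giving conclusion~(ii).

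The main obstacle is the tensor-product step: using primitivity and absolute irreducibility to upgrade a Clifford decomposition of $V$ restricted to $E(H)$ into a genuine tensor factorisation of $V$, with enough control on the dimensions of the factors to separate~$\C_4$ from~$\C_7$ and to exclude the almost simple case only when a genuine tensor structure actually persists. Equally delicate is verifying that the eight geometric classes together with case~(ii) really exhaust the maximal subgroups of $\GL(d,q)$ not containing $\SL(d,q)$, and handling the many overlaps and containments among the~$\C_i$; this is the core content of Aschbacher's 1984 paper, which we cite rather than reproduce here.
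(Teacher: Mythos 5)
The paper gives no proof of this statement: it is Aschbacher's theorem, quoted directly from \cite{aschbacher} in the form of \cite[Theorem~1.2.1]{KL}, so the paper's ``proof'' is the citation itself. Your outline of the standard reduction (reducible gives $\C_1$, imprimitive gives $\C_2$, a nontrivial endomorphism field gives $\C_3$, subfield and form cases give $\C_5$ and $\C_8$, and the analysis of $F^{*}(H)$ yields $\C_6$, the tensor cases $\C_4$/$\C_7$, or the almost simple conclusion) is the correct skeleton of Aschbacher's argument, and since you, like the paper, ultimately defer the delicate steps to the original source, your treatment is in substance the same as the paper's.
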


The subgroups $H$ in Theorem~\ref{Asch} satisfying $H\not\in\C_1\cup\cdots\cup\C_8$ are said to be
of type~$\C_9$.  The size of a maximal subgroup $H$ varies by class: the classes
$\C_1\cup\dots\cup\C_5\cup\C_8$ all contain a `large' subgroup, that is, a subgroup $H$ with $|H|
\geqslant q^{3d+1}$.  On the other hand, for $H\in\C_6\cup\C_7$, we have $|H| < q^{3d+1}$.  To
understand the order of groups in the class $\C_9$, we use the following theorem of Liebeck.

\begin{theorem}[Liebeck~\cite{mL}]\label{T:L}
Let $G_0$ be a simple classical group with natural projective module $V$ of dimension $d$ over
$\mathbb{F}_q$, and let $G$ be a group such that $G_0 \trianglelefteqslant G \leq
\mathrm{Aut}(G_0)$. If $H$ is any maximal subgroup of $G$, then one of the following holds:
\begin{itemize}
\item[(i)] $H$ is a known group (and $H\cap G_0$ has a well-described (projective) action on $V$);
\item[(ii)] $|H| < q^{3d}$.
\end{itemize}
\end{theorem}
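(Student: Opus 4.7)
The strategy is to pivot off Aschbacher's Theorem (Theorem~\ref{Asch}) to reduce to the genuinely hard case, and then to exploit known lower bounds on the dimensions of faithful projective representations of simple groups.

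First I would dispatch the easy cases. By Theorem~\ref{Asch} applied to $H\cap G_0$ inside $\GL(V)$ (modulo scalars), either $H\cap G_0$ lies in a member of $\mathcal C_1\cup\cdots\cup\mathcal C_8$, or $H\cap G_0$ is of type $\mathcal C_9$, i.e.\ its image modulo scalars is almost simple and absolutely irreducible on~$V$. In the first case, every member of $\mathcal C_1$--$\mathcal C_8$ is an explicit \emph{geometric} stabiliser (of a subspace, sum decomposition, extension-field structure, tensor decomposition, subfield subgroup, $r$-group of symplectic type, tensor-product decomposition, or form), and the action of such a subgroup on~$V$ is well-described. These are the ``known groups'' of case~(i), so the conclusion of the theorem holds. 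We may therefore assume $H$ is of type~$\mathcal C_9$.

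In the $\mathcal C_9$ case, write $L=\mathrm{soc}(H^\infty Z(H)/Z(H))$, a non-abelian simple group, and lift to a quasisimple group~$\tilde L$ acting absolutely irreducibly on the $d$-dimensional module $V$ over~$\mathbb F_q$. The key step is to bound $|\tilde L|$ in terms of $d$ and~$q$. For this I would invoke Landazuri--Seitz style lower bounds for the minimal degree $R_p(\tilde L)$ of a faithful projective $\mathbb F_q$-representation of $\tilde L$: for each family of finite simple groups (alternating, classical in defining characteristic, classical in cross characteristic, exceptional Lie-type, sporadic), there is an explicit lower bound on $R_p(\tilde L)$ that grows as a polynomial (or exponential) function of the Lie rank. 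Inverting these bounds yields an upper bound on $|\tilde L|$ of the shape $|\tilde L|<q^{3d-c}$ for some small constant. Finally, one controls $|H|/|\tilde L|$ by bounding $|Z(H)|\cdot|\mathrm{Out}(L)|$; for classical $L$ one has $|\mathrm{Out}(L)|=O(d\log q)$, and $|Z(H)|\le q-1$, both absorbed easily to obtain $|H|<q^{3d}$.

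The main obstacle is plainly the representation-theoretic step: producing sharp lower bounds on $R_p(\tilde L)$ across \emph{all} families of finite simple groups. This is a family-by-family calculation that uses the classification of finite simple groups and, for groups of Lie type in the natural characteristic, the explicit theory of Weyl modules and highest weights; in cross characteristic it uses the Landazuri--Seitz bounds and their subsequent refinements; for alternating and sporadic groups it uses direct inspection of known character tables or Atlas data. The bookkeeping needed to ensure a \emph{uniform} bound of the form $q^{3d}$ that absorbs outer automorphisms and scalars is the delicate part, and is where one must track which small-rank or small-dimension cases happen to violate the bound and must instead be declared ``known'' under case~(i).
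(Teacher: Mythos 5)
First, a point of order: the paper does not prove Theorem~\ref{T:L} at all --- it is quoted, with attribution, from Liebeck's 1985 paper \cite{mL}, and is used in Section~\ref{S4} only to deduce Corollary~\ref{Cor:T:L}. So there is no internal proof to measure your argument against. Judged against the actual source, your outline does capture the genuine strategy of Liebeck's proof: reduce via Aschbacher's theorem to the case where $H$ is not contained in a geometric subgroup, so that modulo scalars $H$ is almost simple and absolutely irreducible on $V$, and then bound $|H|$ by lower bounds on the minimal degree of a faithful projective representation of its socle (Landazuri--Seitz in cross characteristic, highest-weight information in defining characteristic, direct inspection for alternating and sporadic groups), absorbing $|\mathrm Z(H)|\le q-1$ and the outer automorphisms.

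As a proof, however, what you have written is only a scaffold, and the gap you acknowledge is not a technicality --- it \emph{is} the theorem. The entire content lies in the deferred step: showing, uniformly over all families of finite simple groups, that absolute irreducibility in dimension $d$ forces $|\tilde L|<q^{3d}$ except for an explicit list of configurations (alternating groups on deleted permutation modules, classical groups on alternating/symmetric squares, spin modules, etc.), which must then be catalogued into case (i); ``inverting these bounds yields $|\tilde L|<q^{3d-c}$'' is asserted, not derived, and the constant bookkeeping against $|\mathrm{Out}(L)|$ and scalars is exactly where small-rank counterexamples hide. Two smaller points: Theorem~\ref{Asch} as stated in the paper is for subgroups of $\GL(d,q)$ with $\SL(d,q)\not\le H$, whereas here $G_0$ may be symplectic, unitary or orthogonal, so you need the version of Aschbacher's theorem for the relevant classical group (classes defined relative to the form); and it is cleaner to apply it to $H$ itself (maximal in $G$, not containing $G_0$) rather than to $H\cap G_0$, since maximality of $H$ in $G$ is what lets you conclude that containment in a geometric class member pins $H$ down as a ``known'' group in case (i).
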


For $G_0 \cong \mathrm{PSL}(d,q)$, the remarks in \cite{mL} show that the projective actions of the
groups in part (i) of the theorem above are those of groups from classes $\C_1\cup \dots \cup
\C_5\cup\C_8$. Since every maximal subgroup of $\GL(d,q)$ not containing $\SL(d,q)$ must contain
$\mathrm Z(\GL(d,q))$, we obtain:

\begin{corollary}
\label{Cor:T:L}
Let $G=\GL(d,q)$ and suppose that $H$ is a maximal subgroup of $G$ not containing $\SL(d,q)$. Then
$H\in\C_1\cup \dots \cup \C_5\cup\C_8$, or $|H| < q^{3d+1}$.
\end{corollary}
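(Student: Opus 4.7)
The plan is to apply Aschbacher's theorem (Theorem~\ref{Asch}) to trichotomise~$H$ into three buckets: (a) one of the ``large'' geometric classes $\C_1\cup\cdots\cup\C_5\cup\C_8$, (b) the ``small'' classes $\C_6$ or $\C_7$, or (c) the almost-simple class~$\C_9$. Case~(a) is precisely the conclusion of the corollary. For cases~(b) and~(c) it is enough to show $|H| < q^{3d+1}$. Case~(b) is flagged in the remarks preceding Theorem~\ref{T:L}: the standard structural descriptions of a symplectic-type $r$-subgroup normaliser (class~$\C_6$) and of an equidimensional tensor-product stabiliser (class~$\C_7$) give order bounds well below $q^{3d+1}$, a routine direct computation I do not carry out here.

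The substantive case is~(c). First I would establish the assertion of the paragraph preceding the corollary that $Z := \ZZ(\GL(d,q))$ is contained in~$H$. Suppose for contradiction $Z \not\le H$. Then $HZ > H$, and maximality forces $HZ = \GL(d,q)$, so $[\GL(d,q):H] \le |Z| = q-1$. Since $\SL(d,q) \not\le H$, this yields $[\SL(d,q) : \SL(d,q)\cap H] = [\GL(d,q):H] \le q-1$. But the minimal index of a proper subgroup of $\SL(d,q)$ for $d\ge2$ is $(q^d-1)/(q-1) \ge q+1$, a contradiction. Hence $Z \le H$, and by the correspondence theorem $\bar H := H/Z$ is a maximal subgroup of $\PGL(d,q)$; moreover $Z \le H$ together with $\SL(d,q) \not\le H$ translates into $\mathrm{PSL}(d,q) \not\le \bar H$.

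Now I would apply Theorem~\ref{T:L} with $G = \PGL(d,q)$ and $G_0 = \mathrm{PSL}(d,q)$, via the standard embedding $\PGL(d,q) \le \Aut(\mathrm{PSL}(d,q))$. Liebeck offers two alternatives: either $\bar H$ is a known maximal subgroup whose projective action is described in~\cite{mL}, or $|\bar H| < q^{3d}$. In the first case, the remark quoted in the excerpt identifies the projective action of any such known group with a member of $\C_1\cup\cdots\cup\C_5\cup\C_8$; pulling back to $\GL(d,q)$ places $H$ itself into one of these classes, contradicting the assumption that $H$ is of type~$\C_9$. So only the second alternative survives, and it gives $|H| = |\bar H|\cdot|Z| < q^{3d}(q-1) < q^{3d+1}$, as required.

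The main obstacle is the linear-to-projective bookkeeping: Liebeck's theorem is phrased for an almost-simple projective group, while our hypothesis lives in $\GL(d,q)$; the bridge is the containment $Z\le H$, which in turn relies on the minimal-permutation-degree fact for $\SL(d,q)$. A secondary subtlety is the handful of small pairs $(d,q)\in\{(2,2),(2,3)\}$ where $\mathrm{PSL}(d,q)$ is not simple and Liebeck does not directly apply; these are dispatched by an inspection of the few maximal subgroups of the corresponding $\GL(d,q)$, or absorbed into the running hypothesis $p>3$ used in the paper's main theorem.
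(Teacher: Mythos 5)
Your overall route is the paper's own: the bridge $\ZZ(\GL(d,q))\le H$, passage to $\PGL(d,q)$, and an application of Liebeck's Theorem~\ref{T:L} combined with the remark that the ``known'' groups in its part (i) act projectively as members of $\C_1\cup\dots\cup\C_5\cup\C_8$. The preliminary Aschbacher trichotomy and the deferred order estimate for $\C_6\cup\C_7$ are redundant rather than wrong: Liebeck's theorem applies to \emph{every} maximal subgroup of $\PGL(d,q)$, so the argument you give in your case (c) already disposes of case (b) --- in the ``known'' branch one simply lands in the first alternative of the corollary instead of a contradiction.

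The one step that fails as written is your justification of $\ZZ(\GL(d,q))\le H$. The assertion that the minimal index of a proper subgroup of $\SL(d,q)$ is $(q^d-1)/(q-1)$ is false for the classical exceptions $(d,q)\in\{(2,5),(2,7),(2,9),(2,11),(4,2)\}$. For four of these the inequality you actually need (minimal index $>q-1$) survives, but for $(d,q)=(2,9)$ it does not: $\mathrm{PSL}(2,9)\cong A_6$ has subgroups of index $6$, which lift to index-$6$ subgroups of $\SL(2,9)$, and $6\le q-1=8$, so no contradiction is reached and your derivation of $Z\le H$ breaks at exactly this point (the conclusion is still true, just not for the reason given, and the corollary is stated for arbitrary $q$). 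A clean patch avoids indices altogether: if $Z\not\le H$ then $HZ=\GL(d,q)$, and since $Z$ is central, $H$ is normalised by $HZ$, i.e.\ $H$ is normal in $\GL(d,q)$; hence $\GL(d,q)/H\cong Z/(Z\cap H)$ is abelian, so $\SL(d,q)H/H\cong\SL(d,q)/(\SL(d,q)\cap H)$ is an abelian quotient of the perfect group $\SL(d,q)$ (perfect unless $(d,q)\in\{(2,2),(2,3)\}$) and is therefore trivial, forcing $\SL(d,q)\le H$, a contradiction. The pairs $(2,2)$ and $(2,3)$ can be checked directly (for $q=2$ the centre is trivial anyway), alongside the small cases you already flag where $\mathrm{PSL}(d,q)$ is not simple. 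With this repair your argument coincides with the paper's proof.
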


\section{Representation theory of maximal subgroups on Lie powers}
\label{S5}

We now now assume that $\Char(\F)=p$ is an odd prime and that $\F$ is finite. Recall that $V$ is a
$d$-dimensional vector space over $\F$.  The aim of this section is to determine the reducibility of
$L^2V$, $L^3V$ and $L^4V$ (where necessary) as $H$-modules, for a maximal subgroup $H$ of
$\mathrm{GL}(V)$. In the cases where the modules are reducible, we also aim to determine the
smallest quotient modules.

\subsection{The reducible $\C_1$ case}
\ 

\begin{figure}[H]
\caption{The $\GL(V)_U$  composition factors of $A^2V$  and their respective dimensions.}
\label{F0}
\vskip3mm
\begin{tikzpicture}[xscale=0.6, yscale=0.6]
  \draw[fill] (0,0) circle [radius=0.1];
  \draw[fill] (0,1) circle [radius=0.1];
  \draw[fill] (0,2) circle [radius=0.1];
  \draw[fill] (0,3) circle [radius=0.1];
  \draw[line width=1pt,fill] (0,0) node[left,black] {$\{0\}$} -- (0,1);
  \draw[line width=1pt,fill] (0,1) node[left,black] {$\langle u\wedge u'\mid u, u'\in U\rangle=A^2U$} -- (0,2);
  \draw[line width=1pt,fill] (0,2) node[left,black] {$\langle u\wedge v\mid u\in U, v\in V\rangle=U\wedge V$} --  (0,3) node[left,black] {$A^2V$};
  \draw[thick] (0,2.5) node[right] {$\;A^2(V/U)$};
  \draw[thick] (0,1.5) node[right] {$\;U\otimes (V/U)$};
  \draw[thick] (0,0.5) node[right] {$\;A^2U$};
\end{tikzpicture}
\hspace{12mm}
\begin{tikzpicture}[xscale=0.6, yscale=0.6]
  \draw[fill] (0,0) circle [radius=0.1];
  \draw[fill] (0,1) circle [radius=0.1];
  \draw[fill] (0,2) circle [radius=0.1];
  \draw[fill] (0,3) circle [radius=0.1];
  \draw[line width=1pt,fill] (0,0) node[left,black] {$\{0\}$} -- (0,1);
  \draw[line width=1pt,fill] (0,1) node[left,black] {$A^2U$} -- (0,2);
  \draw[line width=1pt,fill] (0,2) node[left,black] {$U\wedge V$} --  (0,3) node[left,black] {$A^2V$};
  \draw[thick] (0,2.5) node[right] {$\;d_1=\binom{d-r}{2}$};
  \draw[thick] (0,1.5) node[right] {$\;d_2=r(d-r)$};
  \draw[thick] (0,0.5) node[right] {$\;d_3=\binom{r}{2}$};
\end{tikzpicture}
\end{figure}

\begin{lemma}\label{L:C1}
Suppose that  $H=\GL(V)_U\in \mathcal C_1$ is the stabiliser of an
$r$-dimensional subspace $U$ of $V$ where $0<r<d:=\dim(V)$.
\begin{enumerate}[{\rm (i)}]
  \item If $d>2$,  then $L^2 V$ is a reducible $H$-module and the dimension
   of the smallest quotient module is $r$ if $d-r=1$, and
   $\binom{d-r}{2}$ otherwise.
  \item If $p>3$ and $d=2$, then $L^2V$ is an irreducible $H$-module, and
   $L^3 V $ is a uniserial, reducible $2$-dimensional $H$-module.
\end{enumerate}
\end{lemma}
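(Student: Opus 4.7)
The plan for part~(i) is to exploit the $H$-invariant filtration
$0 \subset A^2U \subset U \wedge V \subset A^2V = L^2V$
already suggested in Figure~\ref{F0}. First I verify each of $A^2U$ and $U\wedge V$ is $H$-invariant using only that $H$ fixes $U$, and read off the composition factor dimensions $\binom{r}{2}$, $r(d-r)$, $\binom{d-r}{2}$ from~\eqref{E:AS}. Next, I observe that the unipotent radical of $H$ acts trivially on each factor, so the $H$-action on any such factor descends to the Levi quotient $\GL(U) \times \GL(V/U)$; in odd characteristic, $A^2U$ and $A^2(V/U)$ are irreducible for $\GL(U)$ and $\GL(V/U)$ respectively (whenever nonzero), and $U\otimes(V/U)$ is irreducible for the product, so all three factors are $H$-irreducible. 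The smallest nonzero quotient $L^2V/M$ then corresponds to the \emph{largest} proper $H$-submodule $M$: when $d-r \ge 2$ this is $U \wedge V$, giving quotient $A^2(V/U)$ of dimension $\binom{d-r}{2}$; when $d-r=1$ the top factor vanishes and $U\wedge V = A^2V$, so $M = A^2U$ and the quotient $L^2V/A^2U \cong U \otimes (V/U)$ has dimension $r$.

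For part~(ii), the module $L^2V = A^2V$ is $1$-dimensional and hence trivially irreducible. For $L^3V$, Witt's formula~\eqref{E1} gives $f(2,3)=2$, and I fix the left-normed basis $\{x,y\}:=\{[e_1,e_2,e_1],[e_1,e_2,e_2]\}$ where $U = \langle e_1\rangle$. A general $h \in H$ acts by $e_1 \mapsto a e_1$, $e_2 \mapsto be_1 + ce_2$ with $ac \ne 0$, and a short direct computation using the bilinearity of the Lie bracket and the identity $[e_1,e_1]=0$ yields
\[
  x \cdot h = a^2c\,x, \qquad y \cdot h = abc\,x + ac^2\,y.
\]
Hence $\langle x\rangle$ is an $H$-submodule, establishing reducibility. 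Uniseriality follows by checking that any other candidate line $\langle \alpha x + \beta y\rangle$ with $\beta \ne 0$ fails to be $h$-invariant as soon as $b$ is chosen nonzero.

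The step I expect to be most delicate is the irreducibility of the composition factors as $H$-modules rather than as modules for the Levi subgroup, because $H$ is not reductive. The resolution is that the unipotent radical acts trivially on each factor by construction of the filtration, so irreducibility reduces to the classical irreducibility of $A^2W$ and of tensor products of natural modules for the corresponding general linear groups in odd characteristic; the edge cases $r=1$ and $d-r=1$ collapse the filtration cleanly and are responsible for the two distinct formulas in the statement.
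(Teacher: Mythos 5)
Your part~(ii) is correct: the explicit computation of the $H$-action on the left-normed basis $\{[e_1,e_2,e_1],[e_1,e_2,e_2]\}$ is a fine, slightly more computational alternative to the paper's route, which instead uses $L^3V\cong A^2V\otimes V$ (a one-dimensional twist of the uniserial $H$-module $V$). In part~(i), your filtration $0\subset A^2U\subset U\wedge V\subset A^2V$ and the argument that its three factors are $H$-irreducible (unipotent radical acts trivially on each factor, and the factors are irreducible for the Levi $\GL(U)\times\GL(V/U)$ in odd characteristic) also agree in substance with the paper.

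The gap is in your final step of part~(i): you assert that the largest proper $H$-submodule is $U\wedge V$ (respectively $A^2U$ when $d-r=1$), but a composition series with irreducible factors determines neither the submodule lattice nor the head of the module. If the extension of $A^2(V/U)$ by $U\wedge V$, or of $U\otimes(V/U)$ by $A^2U$, were split, then $A^2U$ or $U\otimes(V/U)$ would occur as a quotient of $L^2V$, and these can be strictly smaller than your claimed answer: for $2\le r<d-r$ one has $\binom{r}{2}<\binom{d-r}{2}$, and for $(d,r)=(3,2)$ one has $\dim A^2U=1<2=r$. So the stated formula for the smallest quotient genuinely requires ruling out such splittings, i.e.\ showing that the only irreducible quotient of $A^2V$ is the top factor. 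The paper does exactly this by proving $A^2V$ is uniserial: the elements $\left(\begin{smallmatrix}I&0\\ *&*\end{smallmatrix}\right)$ of $H$, which fix $U$ pointwise and act transitively on $V\setminus U$, show there is no $H$-invariant complement to $A^2U$ in $U\wedge V$, nor to $U\wedge V$ in $A^2V$. Alternatively, since the unipotent radical $Q$ is a normal $p$-subgroup of $H$, every irreducible $H$-quotient of $A^2V$ in characteristic $p$ is a quotient of $A^2V/W$, where $W$ is spanned by the vectors $xq-x$ with $x\in A^2V$, $q\in Q$; a short computation gives $W=U\wedge V$ when $d-r\ge2$ and $W=A^2U$ when $d-r=1$, which yields the claimed dimensions. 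Without some argument of this kind your proof of part~(i) is incomplete.
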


\begin{proof}
(i) We first show that we have a composition series for the $H$-module $A^2V$ as in
  Figure~\ref{F0}. Define $\pi_1 : A^2V \rightarrow A^2(V/U)$ by $\pi_1( v\wedge w) =
  (v+U)\wedge(w+U)$. This map is a surjective $H$-module homomorphism, with kernel
\begin{equation}\label{E:UwedgeV}
  U \wedge V := \langle u \wedge v \mid u\in U, v\in V \rangle.
\end{equation}
Observe that $A^2 U $ is an $H$-invariant subspace of $U\wedge V$.  We claim that $\{0\} \subseteq
A^2 U \subseteq U \wedge V \subseteq A^2V$ is the desired composition series. Note that
$\mathrm{GL}(V/U)$ and $\GL(U)$ act irreducibly on $A^2(V/U)$ and $A^2 U$ respectively.  We
construct an $H$-module isomorphism $ U \otimes (V/U) \cong (U \wedge V)/ A^2 U$ as follows. We
define $\phi \colon U \otimes (V/U) \rightarrow (U \wedge V)/ A^2 U$ to be the linear extension of
the following map:
\[
  u \otimes (v+U) \mapsto u \wedge v + A^2U.
\]
It is straightforward to check that $\phi$ is well-defined, surjective and an $H$-module
homomorphism. Comparing dimensions reveals that $\phi$ is an $H$-module isomorphism and therefore
shows that $(U \wedge V)/ A^2U$ is also irreducible. Hence $A^2 V $ has a composition series as
depicted in Figure~\ref{F0}, where the factors are irreducible or zero.

To prove that $A^2V$ is a uniserial $H$-module, we must show that $\{0\}, A^2U,U\wedge V, A^2V$ are
the only $H$-submodules of $A^2V$ (some may coincide). Using the fact that invertible matrices of
the form $\left(\begin{smallmatrix}I&0\\ *&*\end{smallmatrix}\right)$ lie in $H$, fix $U$
  elementwise and are transitive on $V\setminus U$, it follows that there is no $H$-invariant
  complement to $A^2U$ in $U\wedge V$.  A similar argument shows that there is no $H$-invariant
  complement to $U\wedge V$ in $A^2V$. Thus $A^2V$ is uniserial as claimed and the dimensions of the
  composition factors are as shown in Figure~\ref{F0}.  Note that, since $d>2$, there are at least
  two composition factors, so $A^2V$ is reducible. If $d_1=0$, then $r=d-1$ is the dimension of the
  smallest quotient module.

(ii) Suppose now that $d=2$, $r=1$ and $p>3$.  Then $A^2 V $ is an irreducible 1-dimensional
  $H$-module and $A^3V=\{0\}$.  Since $V$ is a reducible $H$-module, $L^3V\cong A^2V\otimes V$ is a
  uniserial 2-dimensional $H$-module with unique non-trivial submodule $A^2V \otimes U$.
\end{proof}

\subsection{The imprimitive $\C_2$ case}

\begin{lemma}\label{L:C2}
Suppose that $H=\GL(V_1)\wr\sym_r\in \mathcal C_2$ fixes an
equidimensional decomposition
\[
   V = V_1 \oplus \dots \oplus V_r \quad
  \textup{ where $1<r\le d$   and $\Char(\F)=p>2$}.
\]
\begin{enumerate}[{\rm (i)}]
  \item If $1<r <d$, then $L^2V=U_1\oplus U_2$ where $U_1$ and $U_2$ are
   irreducible $H$-modules satisfying 
   $0<  \frac{d}{2} (\frac{d}{r}-1)=\dim(U_1)<\dim(U_2).$
  \item If $p>3$ and $2<r=d$, then $H$ acts irreducibly on $L^2V$, and 
   $L^3V$ is a sum of two irreducible $H$-modules
   of dimensions $2\binom{d}{2}$ and $2\binom{d}{3}$.
  \item If $p>3$ and $2=r=d$, then $H$ acts irreducibly on $L^2V$ and $L^3V$, and
   $L^4V\cong A^2V\otimes S^2V\cong X_1\oplus X_2$ where $\dim(X_1)=2$
   and $\dim(X_2)=1$.
\end{enumerate}
\end{lemma}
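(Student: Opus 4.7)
The plan is to treat the three parts in sequence, all based on decomposing the relevant Lie power along the $H$-invariant direct sum $V=V_1\oplus\cdots\oplus V_r$ and then exploiting the fact that the base group $K=\GL(V_1)\times\cdots\times\GL(V_r)$ of $H$ acts with pairwise distinct central characters on the natural summands, while $\sym_r$ permutes them within orbits.

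For part~(i), each $V_i$ has dimension $d/r\ge 2$ (since $r<d$ and $r\mid d$), and $A^2V$ splits naturally as
\[
  A^2V = \Bigl(\bigoplus_{i=1}^r A^2V_i\Bigr)\oplus\Bigl(\bigoplus_{1\le i<j\le r}V_i\wedge V_j\Bigr)=: U_1\oplus U_2.
\]
Each $A^2V_i$ is an irreducible $K$-module (with $\GL(V_i)$ acting on the $i$-th factor), and each $V_i\wedge V_j\cong V_i\otimes V_j$ is likewise an irreducible $K$-module; moreover, the distinct summands are pairwise non-isomorphic as $K$-modules, distinguishable by the action of the centre of $K$ via the characters $\alpha_i^2$ and $\alpha_i\alpha_j$. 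I would then observe that any $H$-submodule of $U_t$ is in particular a $K$-submodule, hence a sum of some of these summands, and transitivity of $\sym_r$ on indices $i$ (respectively on pairs $\{i,j\}$) forces $U_t$ to be $H$-irreducible. The dimensions $\dim U_1=\frac{d}{2}(\frac{d}{r}-1)$ and $\dim U_2=\binom{r}{2}(d/r)^2$ follow from direct counting, and the inequality $\dim U_1<\dim U_2$ reduces to $r(d+1)>2d$, which holds since $r\ge 2$.

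For part~(ii), with $r=d>2$, each $V_i$ is $1$-dimensional so $A^2V_i=0$, and the argument in~(i) then gives $L^2V=U_2$ as an irreducible $H$-module of dimension $\binom{d}{2}$. The group $H=\GL(1,\F)\wr\sym_d$ is exactly the monomial group appearing in Lemma~\ref{L4}(i), so the decomposition $L^3V=X_1\oplus X_2$ into irreducible $H$-modules of dimensions $2\binom{d}{2}$ and $2\binom{d}{3}$ is a direct application of that lemma. For part~(iii), with $r=d=2$, $L^2V=A^2V$ is $1$-dimensional and trivially irreducible; while $L^3V$ has basis $[e_1,e_2,e_1],\,[e_1,e_2,e_2]$ carrying the distinct $K$-characters $\alpha^2\beta$ and $\alpha\beta^2$, and the transposition in $\sym_2$ exchanges these (up to sign), which forces $L^3V$ to be $H$-irreducible. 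For $L^4V$ I would invoke Lemma~\ref{L^4V} to identify it with $A^2V\otimes S^2V$ as a $\GL(V)$-module, hence also as an $H$-module. Since $A^2V$ is $1$-dimensional, tensoring with it preserves the $H$-submodule lattice, so the decomposition reduces to that of $S^2V$: the vector $e_1\odot e_2$ spans a $\sym_2$-fixed $1$-dimensional submodule, while $\{e_1\odot e_1,\,e_2\odot e_2\}$ span a $2$-dimensional irreducible $H$-submodule (their $K$-characters $\alpha^2,\beta^2$ differ and are exchanged by $\sym_2$), producing the desired $X_1\oplus X_2$ with $\dim X_1=2$ and $\dim X_2=1$.

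The main obstacle is not in any single step but in carefully combining the $K$-isotypic decomposition with the action of $\sym_r$: one must check in each case both that the central characters of $K$ genuinely separate the natural summands, and that $\sym_r$ acts in a single orbit on each collection of summands. In part~(iii) the sign twist coming from the nontrivial $\sym_2$-action on $A^2V$ is a small but important bookkeeping point, needed to identify $X_1$ and $X_2$ inside $L^4V$ rather than inside $S^2V$.
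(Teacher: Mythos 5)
Your argument follows the paper's proof essentially step for step: the same splitting $A^2V=U_1\oplus U_2$ with the base group $K$ acting multiplicity-freely on the natural summands and $\sym_r$ permuting them transitively (Clifford's theorem) for part (i), Lemma~\ref{L4}(i) for the $L^3V$ statement in part (ii), and Lemma~\ref{L^4V} together with the explicit splitting of $A^2V\otimes S^2V$ into a $2$- and a $1$-dimensional piece for part (iii); your dimension count and the reduction of $\dim U_1<\dim U_2$ to $r(d+1)>2d$ are also correct. The one slip is your justification of pairwise non-isomorphism of the summands of $U_1$ by central characters: part (i) assumes only $p>2$, and over $\F=\F_3$ squaring is trivial on $\F^{\times}$, so all the characters $\alpha_i^2$ of $\ZZ(K)$ coincide and the centre of $K$ does not separate $A^2V_i$ from $A^2V_j$. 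The summands are nevertheless pairwise non-isomorphic as $K$-modules, because the $j$-th factor of $K$ acts trivially on $A^2V_i$ but non-trivially on $A^2V_j$ (this is the paper's ``differing kernels'' observation), so your proof is repaired by replacing the central-character test with this remark; for the summands $V_i\otimes V_j$ of $U_2$, and throughout parts (ii) and (iii) where $p>3$, your character argument is fine.
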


\begin{proof}
(i)~Consider the base group $K:=G_1\times\cdots\times G_r$ of $H$ where $G_i=\textup{GL}(V_i)$.  For
  each $i$ we identify $A^2V_i$ with the obvious subspace of $A^2V$. Furthermore, for $i\neq j$ set
  $V_i \wedge V_j:=\langle u \wedge w \mid u\in V_i, w \in V_j \rangle$ mimicking the notation
  in~\eqref{E:UwedgeV}. Then $V_i \wedge V_j = V_j \wedge V_i$, and we note that $V_i \wedge V_j$ is
  isomorphic as a $K$-module to $V_i \otimes V_j$ if $i\ne j$. Hence we have the following
  $K$-module decomposition:
\[
  A^2V=A^2\left(\bigoplus_{i=1}^r V_i\right)=U_1\oplus U_2
  \qquad\textup{where $U_1 \cong \bigoplus_{i=1}^r A^2V_i$ and
    $U_2\cong\bigoplus_{i<j} V_i\otimes V_j$.}
\]

Observe that $A^2V_i$ and $V_i\otimes V_j$ are irreducible $K$-modules.  Thus $A^2V_1,\dots,A^2V_r$
are pairwise non-isomorphic $K$-submodules of $U_1$, and the $V_i\otimes V_j$ with $i<j$ are
pairwise non-isomorphic $K$-submodules of $U_2$ (witnessed by the differing kernels of the action of
$K$).  However, $\sym_r$ permutes these non-isomorphic $K$-modules transitively.  It follows from
Clifford's Theorem \cite[pp. 343--344]{CR} that both $U_1$ and $U_2$ are irreducible $H$-modules.
We have $\dim(U_1)=r \binom{d/r}{2}$, $\dim(U_2 ) = \binom{r}{2} \frac{d^2}{r^2}$, and
$0<\dim(U_1)<\dim(U_2)$.  Hence when $r<d$ we have that $A^2V$ is a reducible $H$-module.

(ii)~Suppose now that $2<r=d$.  By part (i), $U_1 = \{0\}$ and $A^2V=U_2$ is an irreducible
$H$-module.  By Lemma~\ref{L4}(ii), $L^3V=X_1\oplus X_2$ is a sum of irreducible $H$-submodules of
dimensions $2\binom{d}{2}$ and $2\binom{d}{3}$, respectively.

(iii) Finally, consider the case that $2=r=d$. Then $L^2V=A^2V$ is 1-dimensional and $A^3 V
=\{0\}$. Hence $L^3V\cong A^2 V \otimes V$ is the tensor product of an irreducible $H$-module with a
1-dimensional module, and is therefore irreducible.

Restricting the $\GL(V)$-isomorphism $L^4V\cong A^2V\otimes S^2V$ in Lemma~\ref{L^4V}, gives an
$H$-isomorphism.  Now $H$ is generated by matrices of the form
$g=\left(\begin{smallmatrix}0&x\\ y&0\end{smallmatrix}\right)$, where $x$ and $y$ are non-zero, and
  the action of these matrices on $L^4V$ is understood using the map $\phi$ defined in the proof of
  Lemma~\ref{L^4V}. It follows that the following is an $H$-module decomposition:
\[
  A^2V\otimes S^2V\cong \langle v_1\wedge v_2\otimes v_1\odot v_1,
  v_1\wedge v_2\otimes v_2\odot v_2\rangle \oplus \langle v_1\wedge
  v_2\otimes v_1\odot v_2\rangle.
\]
These 2- and 1-dimensional $H$-submodules are irreducible, as desired.
\end{proof}

\subsection{The extension field $\C_3$ case}

We assume that $\F=\F_q$ is finite, $\Char(\F)=p$ and let $\E=\F_{q^r}$ with $r$ a prime.  In this
case, $\Gamma{\mathrm L}(1,\E/\F)$ is a maximal subgroup of $\GL(r,p)$ by \cite[Theorem 1.2.1]{KL}.
The $r$th cyclotomic polynomial $\Phi_r(t)$ factors over $\F_q$ as a product of equal-degree
irreducibles by~\cite[Theorem~2.47(ii),~p.\,61]{LN}.  This common degree divides $r-1$.

\begin{lemma}
\label{L:GammaL1}
Let $\E=\F_{q^r}$ and $\F=\F_q$ where $r$ is a prime and $q$ a power of the prime $p$.  Let $V$ be
an irreducible $\Gamma\mathrm L(1,\E/\F)$-module over~$\F$. Then $\dim(V)$ equals $r$, or
divides~$r-1$.  In particular, the maximum dimension of an irreducible $\Gamma\mathrm
L(1,\E/\F)$-module over $\F$ is $r$.
\end{lemma}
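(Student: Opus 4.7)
The plan is to decompose the group algebra $\F[\Gamma\mathrm{L}(1,\E/\F)]$ into blocks and identify each one explicitly. Write $\Gamma\mathrm{L}(1,\E/\F) = C \rtimes \langle \sigma \rangle$, where $C = \E^\times$ is cyclic of order $q^r-1$ and $\sigma$ generates $\mathrm{Gal}(\E/\F)$, acting on $C$ by $c \mapsto c^q$. Since $|C|$ is coprime to $p$, the algebra $\F[C]$ is semisimple, with blocks indexed by Frobenius orbits on the character group $\widehat{C} = \mathrm{Hom}(C,\overline{\F}^\times)$: an orbit of size $s$ contributes a block isomorphic to $\F_{q^s}$. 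Because every character of $C$ has order dividing $q^r-1$, each orbit size divides $r$, and since $r$ is prime, every block of $\F[C]$ is isomorphic to $\F$ or to $\F_{q^r}$.

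Next, I would note that conjugation by $\sigma$ on $C$ is the $q$-power map, so the induced action of $\sigma$ on $\widehat{C}$ coincides with the Frobenius. Consequently $\sigma$ preserves each Frobenius orbit setwise and fixes each block idempotent of $\F[C]$. These idempotents remain central after adjoining $\sigma$, yielding the ring decomposition
\[
\F[\Gamma\mathrm{L}(1,\E/\F)] \;\cong\; \prod_{O}\bigl(B_O \rtimes \langle\sigma\rangle\bigr),
\]
with one factor per Frobenius orbit $O$ on $\widehat{C}$, so the simple $\F[\Gamma\mathrm{L}(1,\E/\F)]$-modules arise factor by factor.

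Finally, one reads off each factor's simple modules. If $|O|=1$, then $B_O \cong \F$ with trivial $\sigma$-action, so the factor is $\F[\Z/r]$; its simple $\F$-modules have dimensions equal to the degrees of the irreducible factors of $x^r-1$ over $\F$, each of which is the multiplicative order of $q$ modulo some divisor of $r$ and so divides $r-1$ (and if $r=p$ there is a unique simple module, of dimension $1$, still a divisor of $r-1$). If $|O|=r$, then $B_O \cong \F_{q^r}$ and $\sigma$ acts on it as a generator of $\mathrm{Gal}(\F_{q^r}/\F)$; Galois descent for cyclic crossed products collapses $B_O \rtimes \langle\sigma\rangle$ to $\End_{\F}(\F_{q^r}) \cong \Mat_r(\F)$, whose unique simple module has $\F$-dimension~$r$. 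This establishes the dichotomy, and the maximum value $r$ is realised by the natural module $\E$. The main obstacle, I expect, is the careful verification in the middle step that $\sigma$ acts on each size-$r$ block as a Galois generator (rather than trivially), since without this the crossed product would not collapse to a matrix algebra and spurious higher-dimensional simples could appear.
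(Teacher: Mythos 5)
Your proof is correct, and it takes a genuinely different route from the paper. The paper argues module-by-module in Clifford-theoretic style: it extends scalars, viewing $V\otimes_\F\E$ as a module for the cyclic normal subgroup $M=\langle\mu\rangle$, picks a one-dimensional $\E M$-constituent with character $\lambda$, and splits into the two cases $\lambda(\mu)\in\F$ (reducing to modules of the cyclic group $\langle\phi\rangle$ of order $r$, whence dimension dividing $r-1$) and $\lambda(\mu)\notin\F$ (forming the sum of the $r$ Galois-conjugate constituents and descending back to $\F$ via results quoted from Aschbacher and Huppert--Blackburn to get dimension exactly $r$). You instead work at the level of the algebra: you decompose $\F[C\rtimes\langle\sigma\rangle]$ along the $\sigma$-stable block idempotents of $\F[C]$, which is legitimate exactly as you say because conjugation by $\sigma$ induces the Frobenius action on $\widehat{C}$ and so fixes each orbit idempotent, and then identify the two kinds of factors: $\F[\Z/r]$ for singleton orbits (simples of dimension dividing $r-1$, including the degenerate case $r=p$) and the skew group ring $\F_{q^r}\ast\langle\sigma\rangle\cong\End_\F(\F_{q^r})\cong\Mat_r(\F)$ for orbits of size $r$ (unique simple of dimension $r$). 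The point you flagged as the main obstacle does hold: on a size-$r$ block the map $ce_O\mapsto\chi(c)$ identifies the block with $\F_{q^r}$ and transports conjugation by $\sigma$ to the $q$-power Frobenius, which is a Galois generator, so the crossed product does collapse to a matrix algebra. Your version buys a complete list of the simple module dimensions, avoids extending scalars on the module and the two external citations, and makes the existence of an $r$-dimensional irreducible (the natural module $\E$) explicit, which the paper's "maximum is $r$" claim technically needs; the paper's version has the advantage of using the same Clifford-style toolkit it deploys elsewhere (e.g.\ in the $\C_3$ analysis of Lemma~\ref{L:C3}).
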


\begin{proof}
Observe that $\Gamma\mathrm L(1,\E/\F)$ is isomorphic to the metacyclic group
\[
  H=\langle\phi,\mu\mid \phi^r=\mu^{q^r-1}=1,\mu^\phi=\mu^q\rangle.
\]
Consider $V^\E= V \otimes_\F \E$ as an $\E M$-module where $M=\langle\mu\rangle$.  As $|M|=q^r-1$ is
coprime to~$p$, it follows that $V^\E$ is a completely reducible $M$-module by Maschke's
Theorem. Let $W$ be an irreducible $\E M$-submodule of~$V^\E$.  Thus $\dim_\E(W)=1$, as $\E$ is a
splitting field for $M$. Hence $\mu$ acts as a non-zero scalar, $\lambda(\mu)\in \E$~say on $W$.

\textsc{Case: $\lambda(\mu)=\lambda(\mu^q)$.}  Since $\lambda(\mu) = \lambda(\mu)^q$, we have
$\lambda(\mu) \in \F$. Then $\langle\mu\rangle$ acts on $V$ as the matrix $\lambda(\mu) I$. It
follows that $V$ is an irreducible $\F H$-module if and only if $V$ is an irreducible
$\langle\phi\rangle$-module. Thus, by the remarks preceding this lemma, $\dim(V)$ divides~$r-1$.

\textsc{Case: $\lambda(\mu)\neq \lambda(\mu^q)$.}  Let $U = \bigoplus_{i=0}^{r-1} W^{\phi^i}$. Note
that $W$ is not isomorphic to $ W^\phi$ as an $\E M$-module by assumption. Hence $U$ is the sum of
pairwise non-isomorphic $\E M$-submodules, which are permuted transitively by $H$. Thus $U$ is an
irreducible $\E H$-module. Note also that $U$ is a summand of $V^\E $. By \cite[26.6(1)]{ASCH} we
have that $V$ is a summand of the restriction $U_\F$, of $U$ to $\F$. By \cite[VII~Theorem
  1.16(e)]{HB2}, $U_\F$ is a direct sum of isomorphic modules, each of dimension $\dim_\E
(U)=r$. Hence $\dim_\F(V)=r$.
\end{proof}

The computational algebra systems~\cite{BCP} and~\cite{GAP} were used to investigate the submodule
structure of Lie powers for $\C_3$ groups~$H$.  The first $n$ for which $L^nV$ was $H$-reducible
turned out to be completely reducible. From the data we collected, we could guess, but not prove,
the dimension of the smallest quotient $H$-module of $L^nV$. Thus we suspect that the three
inequalities that appear in Table~\ref{Tab2} (in the $\C_3$ rows) are in fact equalities.

\begin{lemma}\label{L:C3}
Suppose that $H=\GL(d/r,\F_{q^r})\rtimes\mathrm{Gal}(\F_{q^r}/\F_q)\in\C_3$ is a subgroup of
$\GL(V)$ where $V=(\F_q)^d$, and $r$ is a prime, and suppose $\Char(\F_q)=p>2$.
\begin{enumerate}[{\rm (i)}]
  \item If $1<r < d $ then $H$ acts reducibly on $L^2V$, preserving a
   quotient of dimension~$\binom{d/r}{2}r$.
  \item If $3<r=d$ then $H$ acts reducibly on $L^2V$, with a minimal
    quotient of dimension~$d$.
  \item If $3=r=d$ and $p>3$, then $H$ acts irreducibly on $L^2V$, and reducibly
    on $L^3V$.
  \item If $2=r=d$ and $p>3$, then $H$ acts irreducibly on $L^2V$ and $L^3V$,
    and reducibly on~$L^4V$.
\end{enumerate}
\end{lemma}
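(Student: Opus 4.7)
The uniform strategy is to extend scalars from $\F = \F_q$ to $\E = \F_{q^r}$, exploit that $V^\E := V \otimes_\F \E$ decomposes as $\bigoplus_{i=0}^{r-1} W_i$, where $W_i$ is the $\mu^{q^i}$-eigenspace of $\mu \in M := \langle\mu\rangle$ (a cyclic subgroup of $H$ of order coprime to $p$), and that $\phi \in \Gal(\E/\F)$ cyclically permutes $W_0,\dots,W_{r-1}$. Irreducibility of an $\F H$-module and $H$-invariant decompositions can then be read off by tracking $M$-weights and Galois orbits via Lemma~\ref{L:GammaL1} and Galois descent.

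For part~(i) I would not use the eigenspace analysis but rather construct an explicit $H$-equivariant surjection
\[
  \pi \colon A^2_\F V \twoheadrightarrow A^2_\E V, \qquad v \wedge_\F w \mapsto v \wedge_\E w,
\]
where $V$ is viewed as an $s := d/r$-dimensional $\E$-space. The map $\pi$ is well-defined since the target is $\F$-bilinear and antisymmetric in $(v,w)$, is $\GL(s,\E)$-equivariant by construction, and is Galois-equivariant because the defining relations $\alpha v \otimes w - v \otimes \alpha w$ are permuted among themselves by $\phi$. Comparing $\F$-dimensions gives $\binom{d}{2} > r\binom{s}{2}$ when $s \geq 2$, so $\ker(\pi)$ is non-trivial and the image provides the required quotient of dimension $r\binom{d/r}{2}$.

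For part~(ii), with $s=1$, I decompose $A^2(V^\E) = \bigoplus_{i<j} W_i \wedge W_j$ into 1-dimensional $\E M$-weight spaces with distinct weights $\mu^{q^i+q^j}$. The action of $\phi$ sends $\{i,j\} \mapsto \{i+1, j+1\}$ mod $r$, and since $r$ is prime this action on the $\binom{r}{2}$ pairs is free, producing $(r-1)/2$ orbits of size $r$. Each orbit yields a $\Gal(\E/\F)$-stable $\E H$-submodule that descends by Galois fixed points to an irreducible $\F H$-submodule of $A^2V$ of $\F$-dimension $r = d$ (irreducibility via the standard Clifford argument: distinct weights distinguish the $\E$-lines, and $\phi$ acts transitively on them). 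Hence $A^2V$ is a direct sum of $(r-1)/2$ irreducible $\F H$-modules of $\F$-dimension $d$, so the minimum quotient has dimension $d$.

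For part~(iii), $A^2V$ is handled exactly as in~(ii) with $r=3$: $A^2(V^\E)$ is a single Galois orbit of three distinct $M$-weight lines, so $A^2V$ is irreducible of dimension~$3$. Reducibility of $L^3V$ follows by applying Lemma~\ref{L4}(i) over $\E$ to the decomposition $V^\E = W_0 \oplus W_1 \oplus W_2$, yielding $L^3(V^\E) = X_1^\E \oplus X_2^\E$ of $\E$-dimensions $6$ and $2$, preserved by $\GL(1,\E)\wr S_3$ (which contains the action of $H$ since $\phi$ acts as a $3$-cycle on the $W_i$). Because the construction of $X_1, X_2$ depends only on the \emph{unordered} family $\{W_0,W_1,W_2\}$, both pieces are $\phi$-invariant and descend to a non-trivial $\F H$-decomposition of $L^3V$. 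For part~(iv), $A^2V$ is 1-dimensional hence irreducible; in $L^3(V^\E)$, the basis $[w_0,w_1,w_0], [w_0,w_1,w_1]$ consists of $M$-weight vectors of Galois-conjugate weights $\mu^{2+q}, \mu^{1+2q}$ swapped by $\phi$, so $L^3V$ is irreducible of $\F$-dimension~$2$; finally, $L^4V \cong A^2V \otimes S^2V$ has dimension $1 \cdot 3 = 3$ by Lemma~\ref{L^4V}, which exceeds the bound of $r = 2$ on irreducible $\F H$-module dimensions from Lemma~\ref{L:GammaL1}, forcing reducibility. The main obstacle throughout is ensuring that the $\E H$-decompositions derived from the eigenspace picture are genuinely $\Gal(\E/\F)$-stable and hence descend to $\F$, which in (iii) reduces to the observation that the Bryant--Kov\'acs pieces from Lemma~\ref{L4}(i) are canonical with respect to unordered basis choices.
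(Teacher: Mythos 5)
Your proposal is correct, and for parts (ii)--(iv) it takes a genuinely different route from the paper. Part (i) is essentially the paper's argument: the paper defines the same surjection on a basis, $\alpha_i v_k\wedge\alpha_j v_\ell\mapsto(\alpha_i\alpha_j)\,v_k\wedge v_\ell$, from $A^2_{\F}V$ onto the $\F$-restriction of $A^2_{\E}V$, and checks $\GL(d/r,\E)$- and Galois-equivariance exactly as you do. For (ii) the paper argues quite differently: it diagonalises the Singer element over $\E$, shows each eigenvalue $\zeta^{q^i+q^j}$ on $A^2V$ lies outside $\F$ (so every nonzero $H$-submodule has $\F$-dimension at least $d$), produces one $d$-dimensional invariant $\E$-subspace and realises it over $\F$ via a Brauer character-field argument, and then appeals to complete reducibility of $A^2V$ (which it only asserts). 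Your decomposition of $A^2(V^\E)$ into $(r-1)/2$ free Galois-shift orbits of weight lines, followed by descent, is cleaner and actually proves that complete reducibility, giving $A^2V$ as a direct sum of $(r-1)/2$ irreducibles of dimension $d$. For the reducibility statements in (iii) and (iv) the paper simply quotes Lemma~\ref{L:GammaL1}: an irreducible $\Gamma\mathrm L(1,\E/\F)$-module has dimension $r$ or dividing $r-1$, while $\dim L^3V=8>3$ (resp.\ $\dim L^4V=3>2$). You use that bound only in (iv); in (iii) you instead descend the monomial decomposition of Lemma~\ref{L4}(i) from $\E$, which is more work but also identifies the dimensions ($6$ and $2$) of the pieces, consistent with the bound ``$\le3$'' recorded in Table~\ref{Tab2}. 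Your treatment of $L^3V$ in (iv) via two Frobenius-conjugate weight lines is a legitimate alternative to the paper's one-line observation that $L^3V\cong A^2V\otimes V$ is a $1$-dimensional twist of the irreducible module $V$.

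Two points need a line of justification. First, you assert without proof that the $\binom{r}{2}$ weights $\zeta^{q^i+q^j}$ are pairwise distinct; this is what makes the lines pairwise non-isomorphic $\E M$-modules (it is the place where the paper computes $\xi_{i,j}\notin\F$). It is true and quick: for odd $q$ the integers $q^i+q^j$ with $0\le i<j\le r-1$ have distinct base-$q$ digit patterns and are all smaller than $q^r-1$, so the corresponding powers of $\zeta$ are distinct. Second, in the descent steps keep separate the two commuting actions on $V\otimes_\F\E$: the $\E$-linear extension of the element $\phi\in H$ and the semilinear Galois twist $1\otimes\sigma$. Descent to $\F$ requires stability under the latter; your stated reason (the orbit sums, and the pieces $X_1,X_2$ of Lemma~\ref{L4}(i), are canonical in the unordered family of weight lines) does deliver this, because $1\otimes\sigma$ permutes the $W_i$ by the same index shift as $\phi$ does, but the phrase ``both pieces are $\phi$-invariant and descend'' as written conflates the two actions and should be reworded accordingly.
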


\begin{proof}
(i) As above write $\E =\F_{q^r}$ and $\F=\F_q$.  We think of $H$ as acting semilinearly on $V'=\E
  ^{d/r}$, and view $V$ as $(V')_\F$, i.e., $V'$ with scalars restricted to $\F$.  Thus $\dim_\E
  (V')=d/r$ and $\dim_\F(V)=d$. Similarly, let $T' = A^2V'$, and let $T=(T')_\F$.  Since $d/r>1$, we
  have $\dim_\F(T)=r\dim_\E (T')=r\binom{d/r}{2}>0$.  We construct a surjective $\F H$-module
  homomorphism $\eta\colon A^2V\rightarrow T$.  Certainly $\ker(\eta)$ is a \emph{proper} submodule
  of $A^2V$ because $\dim(T)>0$, and $\ker(\eta)$ is non-zero because
\[
  \dim(\ker(\eta))=\dim(A^2V)-\dim(T)
  =\binom{d}{2}-r\binom{d/r}{2}=\frac{d(d-d/r)}{2}>0.
\]
Fix a basis $\alpha_1,\dots,\alpha_r$ for $\E $ over $\F$ and a basis $v_1,\dots,v_{d/r}$ for
$V'$. Then $V$ has a basis
\[
  \{ \alpha_i v_j \mid 1 \leqslant i \leqslant r, 1 \le j \le d/r \}
  \]
and $T$ has a basis
\[
  \{ \alpha_i v_{j} \wedge v_k \mid 1 \leqslant i \leqslant r, 1 \le j < k \le d/r \}.
  \]
Furthermore, $A^2V$ has a basis consisting of vectors of the form $\alpha_i v_k \wedge \alpha_j
v_\ell$. As $\alpha_i\alpha_j\in \E $, we may write $\alpha_i\alpha _j = \sum_{s=1}^r\lambda_s
\alpha _s$ where $\lambda_s\in \F$. Define $\eta\colon A^2V\to T$ by
\[
  \eta(\alpha_i v_k \wedge \alpha _j v_\ell) = (\alpha _i \alpha _j)
  v_k \wedge v_\ell = \left(\sum_{s=1}^r \lambda_s \alpha _s\right)
  v_k \wedge v_\ell = \sum_{s=1}^r \lambda_s (\alpha _s v_k \wedge
  v_\ell).
\]
Certainly $\eta$ is a $\GL(V')$-homomorphism, and $\eta(\beta v_k\wedge\gamma v_\ell)=\beta\gamma
v_k\wedge v_\ell$ for all $\beta,\gamma\in \E $. As $\theta\in\mathrm{Gal}(\E /\F)$ maps $\alpha_i
v_k$ to $(\alpha_i^\theta) v_k$, we see that $\eta((\alpha_i v_k \wedge \alpha _j v_\ell)^\theta)$
equals
\[
  \eta(\alpha_i^\theta v_k \wedge \alpha _j^\theta v_\ell) = (\alpha
  _i^\theta \alpha _j^\theta) v_k \wedge v_\ell = \sum_{s=1}^r \lambda_s
  (\alpha _s^\theta v_k \wedge v_\ell) =\eta(\alpha_i v_k \wedge \alpha
  _j v_\ell)^\theta.
\]
Hence $\eta$ is an $H$-homomorphism as desired. Since $\eta$ is a surjective $\F H$-homomorphism,
and $0<\dim(\ker(\eta))<\dim(A^2V)$, $H$ acts reducibly on $A^2V$.  As $\GL(V')$ acts irreducibly on
$A^2V'$, it follows that $H$ acts irreducibly on $T$.

(ii) Suppose that $d=r$ is prime and $r>3$. Then $H\cong \mathrm C_{q^d-1} \rtimes \mathrm C_d$. We
adopt the notation in the proof of Lemma~\ref{L:GammaL1} and write $H=\langle\phi,\mu\mid
\phi^d=\mu^{q^d-1}=1,\mu^\phi=\mu^q\rangle$.  Let $e_0,e_1,\dots,e_{d-1}$ be a basis for $V$ over
$\F=\F_q$.  Let $A$ and $C$ be the $d\times d$ matrices over $\F$ corresponding to the action of
$\phi$ and $\mu$ on $V$. Now let $\E =\F_{q^d}$ and set $V^\E =V\otimes_\F \E $.  Since~$C$ is
irreducible over $\F$, its characteristic polynomial has distinct roots $\zeta, \zeta^q,\dots,
\zeta^{q^{d-1}}$ in~$\E $. Thus $C$ is conjugate in $\GL(d,\E )$ to the diagonal matrix $C^\E
:=\mathrm{diag}(\zeta, \zeta^q,\dots, \zeta^{q^{d-1}})$.  Let $A^\E $ be the matrix with $e_iA^\E
=e_{i+1}$ where the subscripts are read modulo~$d$. Then $A^\E $ satisfies $(C^\E )^{A^\E }=(C^\E
)^q$, and it follows that there exists a matrix in $\GL(d,\E )$ that conjugates $A$ to~$A^\E $ and
$C$ to~$C^\E $. The matrices $A$, $C$ in $\GL(V)$ induce matrices $a$, $c$ in $\GL(A^2V)$ and $A^\E
$, $C^\E $ in $\GL(V^\E )$ induce matrices $a^\E $, $c^\E $ in $\GL(A^2V^\E )$. The induced matrices
$a, c\in \GL(A^2 V)$ and $a^\E , c^\E \in\GL(A^2 (V^\E ))$ are (simultaneously) conjugate in
$\GL(A^2 (V^\E ))$.

The action of $a^\E $ and $c^\E $ relative to the basis $e_i\wedge e_j$, $0\le i<j<d$, for $A^2V^\E$
is given by $e_i\wedge e_ja^\E =e_{i+1}\wedge e_{j+1}$ and $e_i\wedge e_jc^\E
=\zeta^{q^i+q^j}e_i\wedge e_j$.  We show that a typical eigenvalue $\xi_{i,j}=\zeta^{q^i+q^j}$ of
$c^\E $ does not lie in $\F$.  Indeed, suppose that $\xi_{i,j} \in \F$, then
$\xi_{i,j}^{q^{j-i}}=\xi_{i,j}$ and $\zeta^{q^j+q^{2j-i}}=\zeta^{q^i+q^j}$. Since
$\zeta^{q^{2(j-i)}}=1=\zeta^{q^d-1}$, and $q^d-1$ is coprime to $q^{2(j-i)}$, it follows that
$\zeta$ has order~1, a contradiction. As $\xi_{i,j}$ is an eigenvalue of $c$, it follows that $c$
does not fix an $\F$-subspace of dimension less than $d$.  The $d$-dimensional $\E $-subspace
$U=\langle e_i\wedge e_{i+1}\mid 0\le i<d\rangle$, is invariant under $a^\E $ and $c^\E $. The
restrictions of $a^\E $ and $c^\E $ to $U$ have matrices $a^\E _U=A$ and $c^\E
_U=\mathrm{diag}(\xi_{0,1},\xi_{0,1}^q,\dots,\xi_{0,1}^{q^{d-1}})$, respectively.  The subgroup
$\langle a^\E _U,c^\E _U\rangle$ is irreducible by Clifford's Theorem \cite[pp. 343--344]{CR}.  A
simple calculation shows that the character values of the monomial group $\langle a^\E _U,c^\E
_U\rangle$ lie in $\F$, so by a theorem of Brauer~\cite[VII~Theorem 1.16(e)]{HB2}, the subgroup
$\langle a^\E _U,c^\E _U\rangle$ of $\GL(d,\E )$ is conjugate to an irreducible subgroup of
$\GL(d,\F)$. In summary, we have proved that every non-zero $H$-submodule of $A^2V$ has
$\F$-dimension at least $d$, and one has dimension precisely~$d$. As $H$ can be shown to act
completely reducibly on $A^2V$, it follows that the smallest dimensional proper quotient module of
$A^2V$ has dimension~$d$.

(iii) Suppose that $d=r=3$. The argument in part~(ii) shows that $H$ preserves an irreducible
3-dimensional subspace of $A^2V=L^2V$.  Thus $H$ acts irreducibly on $L^2V$. By~\eqref{E1},
$\dim(L^3V)=(3^3-3)/3=8$ so by Lemma~\ref{L:GammaL1}, $H$ acts reducibly on $L^3V$ preserving a
submodule of codimension at most~$3$.

(iv) Suppose that $d=r=2$. Then $H$ acts irreducibly on the 1-dimensional space $L^2 V$, and on the
2-dimensional space $L^3 V\cong A^2V\otimes V$.  Finally, $H$ acts reducibly on $L^4V$ by
Lemma~\ref{L:GammaL1} as $\dim(L^4V)=(2^4-2^2)/4=3$, and $H$ preserves a submodule of codimension at
most~$2$.
\end{proof}

\subsection{The tensor reducible $\C_4$ case}

\begin{lemma}\label{L:C4}
  Suppose that $H=\GL(V_1)\circ\GL(V_2)\in \mathcal C_4$ where $2\le\dim(V_1)<\dim(V_2)$ and
  $\Char(\F)\neq 2$. Then $L^2(V_1\otimes V_2)=U_1\oplus U_2$ where $U_1\cong A^2V_1\otimes S^2V_2$
  and $U_2\cong S^2V_1\otimes A^2V_2$ are irreducible $H$-modules satisfying
  $0<\dim(U_1)<\dim(U_2)<\dim( L^2(V_1\otimes V_2))$.
\end{lemma}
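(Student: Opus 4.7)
\medskip

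The plan is to exploit the identification $L^2W=A^2W$ (noted just before Lemma~\ref{L4}) applied to $W=V_1\otimes V_2$, and then to decompose $A^2(V_1\otimes V_2)$ using the $\sym_2$-action via $V_i^{\otimes 2}=S^2V_i\oplus A^2V_i$, which is valid because $\Char(\F)\ne 2$.

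First I would set $d_i=\dim(V_i)$ and observe the $\GL(V_1)\times\GL(V_2)$-equivariant isomorphism
\[
(V_1\otimes V_2)^{\otimes 2}\;\cong\; V_1^{\otimes 2}\otimes V_2^{\otimes 2},
\]
given by reordering the tensor factors. Under this isomorphism, the swap $\sigma\in\sym_2$ acting on the left-hand side corresponds to the diagonal swap, acting as $\sigma\otimes\sigma$ on the right. Since $\Char(\F)\ne 2$, both $V_i^{\otimes 2}=S^2V_i\oplus A^2V_i$ decompose into the $(+1)$- and $(-1)$-eigenspaces of $\sigma$, so the $(-1)$-eigenspace of the diagonal swap is exactly
\[
A^2(V_1\otimes V_2)\;\cong\;\bigl(A^2V_1\otimes S^2V_2\bigr)\oplus\bigl(S^2V_1\otimes A^2V_2\bigr).
\]
Setting $U_1:=A^2V_1\otimes S^2V_2$ and $U_2:=S^2V_1\otimes A^2V_2$ gives the desired direct sum, and both summands are clearly $H$-invariant because $\GL(V_1)\times\GL(V_2)$ preserves each tensor factor separately, and the central scalar identifications defining the central product $H=\GL(V_1)\circ\GL(V_2)$ act trivially on $A^2(V_1\otimes V_2)$ modulo this action.

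Next I would verify irreducibility of $U_1$ and $U_2$. The modules $A^2V_i$ and $S^2V_i$ are well-known to be absolutely irreducible $\GL(V_i)$-modules in characteristic $\ne 2$ (they are the classical Schur functors corresponding to the partitions $(1,1)$ and $(2)$; irreducibility over $\F$ follows because they remain irreducible after extending to $\overline{\F}$). A standard result then implies that the external tensor product of two absolutely irreducible modules is again absolutely irreducible as a module for the direct product, and hence irreducible as a module for $H$, since the image of $\GL(V_1)\times\GL(V_2)$ in $\GL(V_1\otimes V_2)$ is $H$.

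Finally, for the dimensions, I would compute
\[
\dim(U_1)=\tbinom{d_1}{2}\tbinom{d_2+1}{2},\qquad \dim(U_2)=\tbinom{d_1+1}{2}\tbinom{d_2}{2},
\]
which gives $\dim(U_1)>0$ (since $d_1\ge 2$) and an elementary computation (multiplying out) yields
\[
\dim(U_2)-\dim(U_1)=\tfrac{1}{2}d_1d_2(d_2-d_1)>0,
\]
using $d_1<d_2$. Adding, one checks $\dim(U_1)+\dim(U_2)=\binom{d_1d_2}{2}=\dim A^2(V_1\otimes V_2)$, confirming both the decomposition and the strict inequality $\dim(U_2)<\dim L^2(V_1\otimes V_2)$. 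The only genuinely delicate point is ensuring absolute irreducibility over $\F$ of the external tensor product $A^2V_1\otimes S^2V_2$, which is why the hypothesis $\Char(\F)\ne 2$ is crucial; everything else is linear algebra and dimension counting.
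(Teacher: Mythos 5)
Your proof is correct and takes essentially the same route as the paper: rearrange tensor factors to get $(V_1\otimes V_2)^{\otimes 2}\cong V_1^{\otimes 2}\otimes V_2^{\otimes 2}$, extract the antisymmetric ($-1$-eigenspace) part to obtain $A^2V\cong A^2V_1\otimes S^2V_2\;\oplus\;S^2V_1\otimes A^2V_2$, and finish with irreducibility of the external tensor products for $\GL(V_1)\times\GL(V_2)$ (hence for $H$) together with the dimension count $0<\binom{d_1}{2}\binom{d_2+1}{2}<\binom{d_1+1}{2}\binom{d_2}{2}<\binom{d_1d_2}{2}$. Your explicit invocation of absolute irreducibility of $A^2V_i$ and $S^2V_i$ to justify irreducibility of the external tensor products over a finite field is a point the paper leaves implicit, but it does not change the argument.
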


\begin{proof}
Let $H=\GL(V_1)\circ\GL(V_2)$ preserve the decomposition $V = V_1 \otimes V_2$ where
$2\le\dim(V_1)<\dim (V_2)$. By~\eqref{E:A+S}, we have the following $H$-module isomorphisms
\begin{align*}
  T^2V&=(V_1\otimes V_2)\otimes(V_1\otimes V_2)\\ &\cong(V_1\otimes
  V_1)\otimes(V_2\otimes V_2)\\ &\cong \left(S^2V_1\oplus
  A^2V_1\right)\otimes \left(S^2V_2\oplus A^2V_2\right)\\ &\cong
  \left(S^2V_1\otimes S^2V_2\;\oplus\; A^2V_1\otimes A^2V_2\right)
  \;\oplus\; \left(S^2V_1\otimes A^2V_2\;\oplus\; A^2V_1\otimes
  S^2V_2\right)\\ &\cong S^2V\oplus A^2V.
\end{align*}
Equating symmetric and anti-symmetric parts gives the following $H$-module isomorphisms:
\begin{align*}
  S^2 V &\cong S^2V_1 \otimes S^2V_2 \;\oplus\; A^2V_1 \otimes A^2V_2,
    \quad\textup{and}\\
  A^2 V &\cong S^2V_1 \otimes A^2V_2 \;\oplus\; A^2V_1 \otimes S^2V_2.
\end{align*}
In particular, we see that $A^2V\cong L^2V$ is reducible as an $H$-module. Since $S^2V_i$ and
$A^2V_i$ are irreducible $\GL(V_i)$-submodules (for $i=1,2$), it follows that $S^2V_1\otimes A^2V_2$
and $A^2V_1\otimes S^2V_2$ are irreducible modules for $\GL(V_1)\times\GL(V_2)$ and hence for
$H=\GL(V_1)\circ\GL(V_2)$.  Since $2\le d_1<d_2$ where $d_1=\dim(V_1)$ and $d_2=\dim(V_2)$, it is
easy to see that
$0<\binom{d_1}{2}\binom{d_2+1}{2}<\binom{d_1+1}{2}\binom{d_2}{2}<\binom{d_1d_2}{2}$, and hence
$0<\dim(U_1)<\dim(U_2)<\dim(L^2(V_1\otimes V_2))$.
\end{proof}

\subsection{The tensor induced case $\C_7$}

The classes $\C_i$ considered so far all contain `large' maximal subgroups of $\GL(d,p)$, i.e., ones
with $|H| \geqslant p^{3d+1}$.  By contrast, none of the $\C_7$ subgroups $H$ are large in this
sense; indeed Corollary~\ref{Cor:T:L} shows that $|H| < p^{3d+1}$. Intuitively, the smaller $|H|$ is
compared to $|\GL(d,p)|$ the less likely it is that modules with dimensions much larger than~$d$
remain irreducible, when restricted to~$H$.  Thus one might expect that our desired $p$-group~$G$
(with $A(G)=H$) has small nilpotency class, and that it is not too hard to construct. The first
expectation is true, but not the second, as the small dimensional modules such as $L^2V$ and $L^3V$
turn out to be hard to handle.

\begin{theorem}\label{T:C7}
Let $H=\GL(V_1)\wr \sym_r\le\GL(V)$ preserve the tensor decomposition $V=V_1 \otimes \dots \otimes
V_r$, so $H\in\C_7$.  Suppose that $p:=\Char(\F) > 2$, $r\ge2$, and
$t:=\dim(V_1)=\cdots=\dim(V_r)\ge2$.

\begin{enumerate}[{\rm (i)}]
  \item If $p>2$ and $r>2$, then $L^2V$ is reducible and the smallest quotient module of $L^2V$ has
    dimension $\binom{t}{2}^r$ if $r$ is odd, and $r\binom{t}{2}^{r-1}\binom{t+1}{2}$ if $r$ is
    even.
  \item If $p>3$ and $r=2$, then $L^2V$ is an irreducible $H$-module,
    and $L^3V$ is a reducible $H$-module.  The smallest dimension of a
    quotient module of $L^3V$ is $4$ if $t=2$, and
    $(t+1)t^2(t-1)^2(t-2)/9$ if~$t>2$.
\end{enumerate}
\end{theorem}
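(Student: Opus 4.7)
\textbf{Part (i).}  Starting from the identification $T^2V\cong\bigotimes_{i=1}^r T^2V_i$ and the split $T^2V_i=S^2V_i\oplus A^2V_i$ (valid because $p\neq 2$), the $\sym_2$-swap on $V\otimes V$ acts factor-by-factor on each $V_i\otimes V_i$; hence $A^2V$ is exactly the sum of those $\bigotimes_i X_i$ (with $X_i\in\{S^2V_i,A^2V_i\}$) having an odd number of antisymmetric factors.  Grouping by this number $k$ gives, for the base group $K=\GL(V_1)\times\dots\times\GL(V_r)$, the $K$-decomposition
\[
L^2V=\bigoplus_{\substack{k\text{ odd}\\ 1\le k\le r}} U_k,
\qquad
U_k=\bigoplus_{|I|=k}\Bigl(\bigotimes_{i\in I}A^2V_i\Bigr)\otimes\Bigl(\bigotimes_{j\notin I}S^2V_j\Bigr).
\]
Each inner summand is an absolutely irreducible $K$-module; distinct summands are pairwise non-isomorphic as $K$-modules; and $\sym_r$ permutes them transitively within each $U_k$, so Clifford's theorem forces $U_k$ to be $H$-irreducible.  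Since $r\ge 3$ there are at least two such $k$, proving reducibility.  From $\dim U_k=\binom{r}{k}\binom{t}{2}^k\binom{t+1}{2}^{r-k}$ together with $\binom{t+1}{2}/\binom{t}{2}=(t+1)/(t-1)>1$, an elementary ratio comparison shows that $\dim U_k$ is minimised over odd $k$ at the largest admissible value of $k$, giving dimensions $\binom{t}{2}^r$ when $r$ is odd and $r\binom{t}{2}^{r-1}\binom{t+1}{2}$ when $r$ is even.

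\textbf{Part (ii), $L^2V$.}  For $r=2$ the only admissible summand is $U_1=(A^2V_1\otimes S^2V_2)\oplus(S^2V_1\otimes A^2V_2)$, whose two halves are non-isomorphic $K$-irreducibles interchanged by $\sym_2$; Clifford again makes $L^2V=U_1$ an $H$-irreducible.

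\textbf{Part (ii), $L^3V$.}  Here I mimic the idempotent argument of Lemma~\ref{L4}(iii).  Each cube decomposes as a $\sym_3\times\GL(V_i)$-module
\[
T^3V_i\cong \mathrm{triv}\otimes S^3V_i\;\oplus\;\mathrm{std}\otimes L^3V_i\;\oplus\;\mathrm{sgn}\otimes A^3V_i,
\]
where $\mathrm{std}$ is the $2$-dimensional irreducible $\sym_3$-module.  Since $T^3V\cong T^3V_1\otimes T^3V_2$ carries the \emph{diagonal} $\sym_3$-action and $T^3V\cdot e_3\cong L^3V\oplus L^3V$ as $\GL(V)$-modules, the task reduces to computing, for each pair $(\lambda,\mu)\in\{\mathrm{triv},\mathrm{std},\mathrm{sgn}\}^2$, the multiplicity of $\mathrm{std}$ in the Kronecker product of the corresponding $\sym_3$-irreducibles (the hypothesis $p>3$ makes $\F\sym_3$ semisimple, so characters give the same answer as in characteristic zero).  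Exactly five pairs have multiplicity one, yielding the $K$-module isomorphism
\begin{align*}
L^3V\cong\;&\bigl(S^3V_1\otimes L^3V_2\bigr)\oplus\bigl(L^3V_1\otimes S^3V_2\bigr)\\
&\oplus\bigl(A^3V_1\otimes L^3V_2\bigr)\oplus\bigl(L^3V_1\otimes A^3V_2\bigr)\oplus\bigl(L^3V_1\otimes L^3V_2\bigr).
\end{align*}
The swap $\sigma\in\sym_2$ interchanges the first two summands (Clifford produces an $H$-irreducible $N_1$), interchanges the third and fourth ($H$-irreducible $N_2$), and fixes the last summand $W=L^3V_1\otimes L^3V_2$; but any $H$-submodule of $W$ is in particular a $K$-submodule of a $K$-irreducible, hence $0$ or $W$, so $W=:N_3$ is $H$-irreducible.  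Substituting $\dim L^3V_i=t(t^2-1)/3$, $\dim S^3V_i=t(t+1)(t+2)/6$ and $\dim A^3V_i=t(t-1)(t-2)/6$ gives
\[
\dim N_1=\tfrac{t^2(t-1)(t+1)^2(t+2)}{9},\quad \dim N_2=\tfrac{t^2(t-1)^2(t+1)(t-2)}{9},\quad \dim N_3=\tfrac{t^2(t-1)^2(t+1)^2}{9}.
\]
For $t>2$ one checks $\dim N_2<\dim N_3<\dim N_1$, so $N_2$ realises the claimed smallest quotient dimension; for $t=2$ the summand $N_2$ vanishes while $\dim N_3=4<\dim N_1=16$, giving the value~$4$.

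\textbf{Main obstacle.}  The most delicate step is the plethysm decomposition of $L^3(V_1\otimes V_2)$ in characteristic $p>3$ and the correct identification of the $H$-structure on the $\sigma$-fixed summand $W=L^3V_1\otimes L^3V_2$.  One should resist the temptation to split $W$ via the $\pm 1$-eigenspaces of $\sigma$: those eigenspaces are \emph{not} $K$-stable, because $\sigma$ normalises $K$ through a non-trivial outer automorphism of $\GL(V_1)\times\GL(V_2)$.  The correct conclusion is that $W$ is itself $H$-irreducible once it is $K$-irreducible and $\sigma$-invariant, which is exactly the observation used above.
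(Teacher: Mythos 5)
Your proposal is correct, and parts (i) and the $L^2V$ claim in (ii) follow the paper's own argument essentially step for step (split each $T^2V_i=S^2V_i\oplus A^2V_i$, collect the $2^r$ summands into $\sym_r$-orbits $U_k$, apply Clifford's theorem, compare dimensions); the only point to tighten there is the minimisation: $\dim U_k=\binom{r}{k}\binom{t}{2}^k\binom{t+1}{2}^{r-k}$ is \emph{not} monotone in odd $k$ (the factor $\binom{r}{k}$ can dominate when $k<r/2$), so the ratio $(t+1)/(t-1)>1$ alone does not finish the job -- either reduce first to $k\ge r/2$ using $\binom{r}{k}=\binom{r}{r-k}$, as the paper does, or compare each odd $k$ directly with the largest odd $k$, which is easy. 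Your treatment of $L^3V$ in (ii) is a genuinely different route. The paper works inside $A^2V\otimes V$: via Lemma~\ref{L4}(iii) it is a sum of eight irreducible modules for the base group $K$, grouped into $H$-modules $P,Q,R,S_1,S_2$, and the paper then decides which of these make up $L^3V$ by a dimension count with coefficients in $\{0,1\}$, which leaves an ambiguity at $t=4$ (where $\dim Q=\dim R$) that happens not to affect the answer. You instead decompose each $T^3V_i$ into $\sym_3$-isotypic pieces, use the Kronecker multiplicities of $\mathrm{std}$ in $S_\lambda\otimes S_\mu$ together with $T^3V\,e_3\cong L^3V\oplus L^3V$, and read off the $K$-structure of $L^3V$ exactly; this is cleaner, removes the $t=4$ ambiguity (it shows the relevant summand really is $R$, not $Q$), and produces the same three $H$-irreducible summands $N_1,N_2,N_3$ with the same dimensions, hence the same smallest quotient. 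Two steps you should make explicit in a full write-up: that the $\mathrm{std}$-isotypic component of $T^3V_i$ is $\mathrm{std}\boxtimes L^3V_i$ as a $\sym_3\times\GL(V_i)$-module (this follows from Lemma~\ref{L4}(iii) plus the semisimplicity and splitness of $\F\sym_3$ for $p>3$), and the halving step from multiplicity $2m_{\lambda\mu}$ of each outer tensor summand in $T^3V\,e_3\cong (L^3V)^{\oplus 2}$ to multiplicity $m_{\lambda\mu}$ in $L^3V$ itself (semisimplicity and Krull--Schmidt). Your closing remark about not splitting $W=L^3V_1\otimes L^3V_2$ into $\sigma$-eigenspaces is well taken, though note that $K$-irreducibility of $W$ already forces $H$-irreducibility, so the $\sigma$-invariance is only needed to see that $W$ is an $H$-direct summand.
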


\begin{proof}
As $H\in \mathcal C_7$, we have
$H=\GL(V_1)\wr\sym_r\le\GL(V_1^{\otimes r})$ where $t\ge2$
and $r\ge2$.

(i) Suppose now that $V=V_1\otimes\cdots\otimes V_r$
where $r\ge2$ and $p>2$. Rearranging tensor factors, and using~\eqref{E:A+S}
shows that
\[
T^2V=T^2V_1\otimes\cdots\otimes T^2V_r =(A^2V_1\oplus
S^2V_1) \otimes \cdots \otimes (A^2V_r\oplus S^2V_r).
\]
Expanding gives $2^r$ summands. We show that collecting these summands into $\sym_r$-orbits gives
$T^2V=\bigoplus_{k=0}^r U_k$ where the $U_k$ are pairwise non-isomorphic irreducible $H$-submodules
satisfying
\[
  A^2V=\bigoplus_\textup{$k$ odd} U_k,\quad
  S^2V=\bigoplus_\textup{$k$ even} U_k,\quad\textup{and}\quad
  \dim(U_k)=\binom{r}{k}\binom{t}{2}^k\binom{t+1}{2}^{r-k}.
\]
We identify the $2^r$ summands with the elements of the vector space $C=(\F_2)^r$.  The orbits of
$\sym_r$ on the vectors of $C$ are $C_0,\dots,C_r$ where $C_k$ comprises the $\binom{r}{k}$ vectors
with precisely $k$ ones. Define
\[
  U_k=\bigoplus_{(\eps_1,\dots,\eps_r)\in C_k}
  X^{\eps_1}(V_1)\otimes\cdots\otimes
  X^{\eps_r}(V_r)\quad\textup{where}\quad
  X^{\eps_i}(V_j)=\begin{cases}A^2V_j&\textup{if
    $\eps_i=1$,}\\S^2V_j&\textup{if $\eps_i=0$.}\end{cases}
\]
The summands of $U_k$ are pairwise non-isomorphic irreducible modules for the base group
$\GL(V_1)\times\cdots\times\GL(V_r)$ of $H$, so by Clifford's Theorem \cite[pp. 343--344]{CR}, $U_k$
is an irreducible $H$-submodule. The formula for $\dim(U_k)$ is now clear as
$\dim(A^2V_i)=\binom{t}{2}$ and $\dim(S^2V_j)=\binom{t+1}{2}$ by \eqref{E:AS}.

The number of irreducible $H$-submodules $U_k$ of $A^2V$ is the number of odd $k$ satisfying $0\le
k\le r$, namely $\lceil r/2\rceil$.  Hence $A^2V$ is reducible precisely when $r>2$.  Suppose that
$k_0$ is odd and $\dim(U_{k_0})\le\dim(U_k)$ for all odd $k$ satisfying $0\le k\le r$. Observe first
that $r-k<k$ implies that $\dim(U_{r-k})>\dim(U_k)$ so we may assume $r/2\le k_0\le r$. Second, note
that if $k, \ell$ are odd and $r/2\le\ell<k$, then it follows that $\dim(U_\ell)>\dim(U_k)$ because
$\binom{r}{\ell}>\binom{r}{k}$. Hence $k_0=r$ when~$r$ is odd, and $k_0=r-1$ when~$r$ is even. This
proves part~(i).

(ii) Suppose now that $p>3$, $r=2$, and $V=V_1\otimes V_2$. By part~(i), $L^2V$ is irreducible. We
use Lemma~\ref{L4} to investigate the $K$-module structure of $A^2V\otimes V$ where
$K=\GL(V_1)\times\GL(V_2)$ is normal in $H$ of index~2. It follows from part~(i) that we have the
following $K$-module decomposition: $A^2V= (A^2V_1\boxtimes S^2V_2) \oplus (S^2V_1\boxtimes A^2V_2)$
where $\boxtimes$ denotes `outer tensor product' for $K$.  Consider the following $K$-module
decomposition:
\begin{align*}
  A^2V\otimes V &\cong \left((A^2V_1\boxtimes S^2V_2)\oplus
  (S^2V_1\boxtimes A^2V_2)\right) \otimes (V_1\boxtimes V_2)\\ &\cong
  (A^2V_1\otimes V_1)\boxtimes (S^2V_2\otimes V_2) \oplus
  (S^2V_1\otimes V_1)\boxtimes (A^2V_2\otimes V_2).
\end{align*}
Lemma~\ref{L4}(ii) gives $A^2V_i\otimes V_i\cong L^3V_i\oplus A^3V_i$ and
$S^2V_i\otimes V_i\cong S^3V_i\oplus L^3V_i$, so
\begin{align*}
  A^2V\otimes V &\cong (L^3V_1\oplus A^3V_1)\boxtimes
  (S^3V_2\oplus L^3V_2) \oplus (S^3V_1\oplus L^3V_1)\boxtimes (L^3V_2\oplus A^3V_2)\\ &\cong
  (B_1\oplus C_1)\boxtimes(A_2\oplus B_2)\oplus (A_1\oplus
  B_1)\boxtimes(B_2\oplus C_2)
\end{align*}
where $A_i=S^3V_i$, $B_i=L^3V_i$, and $C_i=A^3V_i$.
Expanding shows that $A^2V\otimes V$ is
a sum of 8 irreducible $K$-modules as follows:
\begin{equation}\label{E:PQRS}
  A^2V\otimes V\cong P\oplus Q\oplus R\oplus S
\end{equation}
where
\begin{align*}
  P&=A_1\boxtimes B_2\oplus B_1\boxtimes A_2,\qquad &&Q=A_1\boxtimes
  C_2\oplus C_1\boxtimes A_2,\\ R&=B_1\boxtimes C_2\oplus C_1\boxtimes
  B_2, &&S=B_1\boxtimes B_2\oplus B_1\boxtimes B_2.
\end{align*}

By Clifford's Theorem \cite[pp. 343--344]{CR}, $P$, $Q$ and $R$ are pairwise non-isomorphic
irreducible $H$-modules, whilst $S$ is the sum of two irreducible $H$-modules, $S_1$ and $S_2$ say,
each isomorphic to $B_1 \otimes B_2$. Using Lemma~\ref{L4}(iii), we reconcile the $H$-decompositions
\[
  A^2V\otimes V=L^3V\oplus A^3V\quad\textup{and}\quad
  A^2V\otimes V=P\oplus Q\oplus R\oplus S_1\oplus S_2.
\]
\begin{table}[!ht]
\renewcommand{\arraystretch}{1.8} 
\caption{Dimensions of irreducible $H$-submodules of $A^2V\otimes V$.}
\label{Tab3}\kern-2mm\begin{center}
\begin{tabular}{|c||c|c|c|c|c||c|c|c|c|} \hline
$U$&$P$&$Q$&$R$&$S_1$&$S_2$&$a=\dim(A_i)$&$b=\dim(B_i)$&$c=\dim(C_i)$&$d$\\ \hline
  $\dim(U)$&$2ab$&$2ac$&$2bc$&$b^2$&$b^2$&$\frac{(t+2)(t+1)t}{6}$&$\frac{(t+1)t(t-1)}{3}$&$\frac{t(t-1)(t-2)}{6}$&$t^2$\\ \hline
\end{tabular}
\end{center}
\end{table}

The dimensions of the modules $P$, $Q$, $R$, $S_1$ and $S_2$ are displayed in
Table~\ref{Tab3}. Since $L^3V$ is a completely reducible $H$-module, there exist $p,q,r,s_1,s_2 \in
\{0,1\}$ such that
\[
  \dim L^3V = \frac{t^6-t^2}{3} = p \dim P + q\dim Q + r \dim R
  +s_1\dim S_1 + s_2 \dim S_2.
\]
The above gives rise to 32 polynomial equations in $t$. If $t\neq 4$, then the only solutions are
$(p,q,r,s_1,s_2) = (1,0,1,1,0)$ or $(p,q,r,s_1,s_2) = (1,0,1,0,1)$. If $t=4$, then there are two
additional possibilities since $\dim R = \dim Q$, namely that $(p,q,r,s_1,s_2) = (1,1,0,0,1)$ or
$(p,q,r,s_1,s_2) = (1,1,0,1,0)$.  Renumbering if necessary, assume that $S_1\le L^3V$ and thus
$S_2\le A^3V$.  Hence, if $t \neq 4$ we obtain $L^3V \cong P\oplus R \oplus S_1$. When $t= 4$ the
additional possibility that $L^3V\cong P \oplus Q \oplus S_1$ arises. As $L^3V$ is completely
reducible, the smallest non-zero quotient $H$-module is isomorphic to the smallest irreducible
$H$-submodule of $L^3V$.  If $t=2$ then $c=0$ and $L^3V \cong P \oplus S_1$ and the minimal
dimension of an $H$-submodule of $L^3V$ is $4$. If $t>2$ then $c > 0$ and the dimensions of the
minimal $H$-submodules of $L^3V$ are $2ab$, $2bc$ and~$b^2$. Since $a>c$ and $b>2c$, the smallest
dimension of a minimal submodule of $L^3V$ in this case is $2bc=(t+1)t^2(t-1)^2(t-2)/9$.
\end{proof}

\subsection{The $\C_8$ case, classical groups in natural action}

As our primary interest is in the field $\F_p$, we do not consider the unitary groups here. The
following remark elucidates the symplectic case in Lemma~\ref{L:C8}(i).

\begin{remark}\label{R:extra}
The extraspecial group~$G$ of order $p^{1+2m}$ with exponent~$p>2$ has
a pc-presentation
\begin{equation}\label{Epc}
  G=\left\langle g_1,\dots,g_{2m+1}\mid g_1^p=\dots=g_{2m+1}^p=1,
  \ g_{2k}^{g_{2k-1}}=g_{2k}g_{2m+1},\ 1\le k\le m\right\rangle
\end{equation}
where $g_j^{g_i}=g_j$ for $1\le i<j\le 2m+1$ and $(i,j)\not\in\{(2k-1,2k)\mid 1\le k \le m\}$.
Using collection, we can symbolically multiply
\begin{align*}
  \big(g_1^{x_1}g_2^{y_1}\cdots
  g_{2m-1}^{x_m}g_{2m}^{y_m}g_{2m+1}^{z}\big)
  \big(g_1^{x'_1}g_2^{y'_1}&\cdots
  g_{2m-1}^{x'_m}g_{2m}^{y'_m}g_{2m+1}^{z'}\big)\\ &=g_1^{x_1+x'_1}g_2^{y_1+y'_1}\cdots
  g_{2m-1}^{x_m+x'_m}g_{2m}^{y_m+y'_m} g_{2m+1}^{z+z'+\sum_{k=1}^m
    x_ky'_k}.
\end{align*}
However, writing $v_1=(x_1,y_1,\dots,x_m,y_m)$ and $v'_1=(x'_1,y'_1,\dots,x'_m,y'_m)$, we have a
more symmetric multiplication rule on pairs in $\F_p^{2m} \times \F_p$:
\[
  (v_1,v_2)(v'_1,v'_2)=(v_1+v'_1,v_2+v'_2+\beta(v_1,v_1'))
\]
where $\beta(v_1,v'_1)=\sum_{k=1}^m (x_ky'_k-x'_ky_k) \pmod p$. This rule is a `quotient' of the Lie
$2$-tuple rule in Example~\ref{R}, and it helps to show that the conformal symplectic group
$\textup{CSp}(\beta)$ is a subgroup of~$\Aut(G)$.  If $g\in\textup{CSp}(\beta)$ satisfies
$\beta(v_1g,v'_1g)=\beta(v_1,v'_1)\delta_g$ where $\delta_g\in\F$ is non-zero, then the map
$(v_1,v_2)\alpha_g=(v_1g,v_2\delta_g)$ lies in $\Aut(G)$, and $g\mapsto\alpha_g$ is a monomorphism
$\textup{CSp}(\beta)\to\Aut(G)$. This proves that $\Aut(G)$ \emph{splits} over $\textup{Inn}(G)$,
c.f.~\cite[Theorem~1(a)]{Winter}.
\end{remark}

\begin{lemma}
\label{L:C8}
Suppose that $H\in \mathcal C_8$ is the stabiliser of a non-degenerate form on $V=(\F_q)^d$, where
$q$ is an odd prime power and $d>2$.
\begin{enumerate}[{\rm (i)}]
  \item If $H$ preserves an alternating form, then $H$ acts reducibly on $L^2V$, and the smallest
    dimension of a quotient module is~$1$.
  \item If $H$ preserves a quadratic form, then $H$ acts irreducibly on $L^2V$, and reducibly on
    $L^3V$. Moreover, the smallest dimension of a quotient module of $L^3V$ is $d$ or $1$.
\end{enumerate}
\end{lemma}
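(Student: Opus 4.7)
My plan for part~(i) is immediate: the non-degenerate alternating form $\beta$ is precisely a non-zero element of $A^2V^* = (L^2V)^*$ fixed by $H$, so contraction against $\beta$ gives an $H$-equivariant surjection $L^2V \to \F$ onto the trivial module. This shows $L^2V$ is reducible and that the smallest possible quotient dimension, $1$, is attained.

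For part~(ii), when $\beta$ is symmetric it lies in $S^2V^* = (S^2V)^*$ rather than $A^2V^* = (L^2V)^*$, so the obstruction of part~(i) vanishes. I plan to prove $L^2V$ is irreducible by identifying $A^2V$ with the adjoint module $\mathfrak{so}(V)$ via the $H$-isomorphism $[u,v] \leftrightarrow (w \mapsto \beta(u,w)v - \beta(v,w)u)$, and then invoking the classical irreducibility of the adjoint representation of an orthogonal group in odd characteristic for $d \ge 3$. The one delicate case is $d=4$ of $+$-type, where $\mathfrak{so}(4) \cong \mathfrak{sl}_2 \oplus \mathfrak{sl}_2$; here the full stabiliser $H$ of the form contains outer elements swapping the two summands, which restores irreducibility of $A^2V$.

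The reducibility of $L^3V$ and the presence of a $d$-dimensional quotient I establish via a natural contraction. Define $\phi \colon A^2V \otimes V \to V$ by
\[
  \phi([u,v] \otimes w) = \beta(v,w)u - \beta(u,w)v.
\]
This is $H$-equivariant since $H$ preserves $\beta$, and a direct calculation using the symmetry of $\beta$ and the formula~\eqref{EA3V} for $v_1 \wedge v_2 \wedge v_3$ inside $A^2V \otimes V$ shows $\phi(v_1 \wedge v_2 \wedge v_3) = 0$. Hence $\phi$ descends to an $H$-equivariant map $L^3V \to V$, and this map is surjective because for any $u \in V$ one may choose $v, w \in V$ with $\beta(v,w)=1$ and $\beta(u,w)=0$ to realise $u$ as an image. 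Thus $V$ (of dimension $d$, irreducible as an $H$-module) is a quotient of $L^3V$.

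The main obstacle is the dichotomy ``smallest quotient dimension is $d$ or $1$'': I need to rule out quotients of intermediate dimension and identify precisely when the $1$-dimensional alternative occurs. A $1$-dimensional quotient of $L^3V$ corresponds to an $H$-fixed element of $(L^3V)^*$ (up to a linear character of $H$), which by the invariant theory of orthogonal groups can only arise from expressions built from $\beta$ together with (for appropriate $d$) the determinant; such invariants typically factor through $A^3V$ rather than $L^3V$. I expect to complete the dichotomy by combining invariant-theoretic lower bounds on quotient dimensions with a direct examination of the low-rank orthogonal groups (notably $d=3$, where an $\mathfrak{sl}_2$-computation decomposes $L^3V$ into the natural module and a $5$-dimensional irreducible, confirming that $d$ is the minimum) to isolate the precise cases in which the $1$-dimensional alternative occurs.
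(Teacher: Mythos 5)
Your two contraction maps are exactly the paper's argument: for (i) the paper uses $\pi([v,w])=\beta(v,w)$, and for (ii) it defines $\pi(u\otimes v\otimes w)=u\otimes\beta(v,w)$ on $T^3V$, checks $\pi(A^3V)=\{0\}$ using the symmetry of $\beta$ (your computation with \eqref{EA3V} is the same), and exhibits an element of $A^2V\otimes V$ with non-zero image, so that $L^3V=(A^2V\otimes V)/A^3V$ maps onto a non-zero submodule of $V\otimes\F_q$. Two genuine differences are worth recording. First, since every maximal subgroup of $\GL(d,q)$ contains the scalars, $H$ is the \emph{conformal} group, preserving $\beta$ only up to scalars; so your quotients are not the trivial module and $V$ but their twists by the multiplier character (this is why the paper writes the targets as $\F_q$ and $V\otimes\F_q$ with $H$ acting non-trivially on $\F_q$). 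This is cosmetic for the dimension claims, but your assertion that the quotient of $L^2V$ is the trivial module is not quite right as stated. Second, for irreducibility of $L^2V=A^2V$ under the orthogonal $H$ the paper simply cites Liebeck \cite[Table 1]{mL}; your identification of $A^2V$ with $\mathfrak{so}(V)$ via $[u,v]\mapsto(w\mapsto\beta(u,w)v-\beta(v,w)u)$ and appeal to irreducibility of the adjoint module is a reasonable alternative (and you correctly isolate the split $d=4$ case), but you would still need the irreducibility for the \emph{finite} group over $\F_q$, not just for the algebraic group, so the citation is the cleaner route.

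Where you stall --- the ``$d$ or $1$'' dichotomy --- you are holding yourself to a stricter standard than the paper does. The paper makes no invariant-theoretic analysis and no low-rank $\mathfrak{sl}_2$ computation: having observed that the image of the contraction is a non-zero submodule of the irreducible module $V\otimes\F_q$, it concludes that the exhibited quotient of $L^3V$ has dimension exactly $d$, and the ``or $1$'' merely concedes that a $1$-dimensional quotient might also exist; intermediate dimensions are not explicitly excluded there either. Nothing more is needed downstream: the proof of Theorem~\ref{T:other theorem} only requires a maximal $H$-submodule of $L^3V$ that is not $\GL(V)$-invariant, and the kernel of your map already supplies one (it is a maximal submodule of codimension $d$ because the quotient is irreducible, it is non-zero since $\dim L^3V=(d^3-d)/3>d$, and it is not $\GL(V)$-invariant since $L^3V$ is $\GL(V)$-irreducible by Lemma~\ref{L4}(ii)). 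So your proposed programme for pinning down exactly when the $1$-dimensional alternative occurs is not required to match the paper; apart from the multiplier-character point above, what you have written already covers what the paper proves.
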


\begin{proof}
(i)~Suppose that $H=\mathrm{CSp}(\beta)$ is the conformal symplectic group preserving the
  alternating form $\beta\colon V\times V\to\F_q$ up to scalar multiples. Recall that
  $\mathrm{CSp}(\beta)/\mathrm{Sp}(\beta)\cong\F_q^\times\cong\mathrm{C}_{q-1}$.  The linear map
  $\pi\colon L^2V \rightarrow \F_q$ satisfying $\pi([v,w] )=\beta(v,w)$ is well-defined precisely
  because $\beta$ is alternating.  Moreover, since $\beta$ is an $H$-invariant form we have that
  $\pi$ is an $H$-module homomorphism, and $\mathrm{CSp}(\beta)$ acts non-trivially on $\F_q$ with
  kernel $\mathrm{Sp}(\beta)$.  Clearly $\pi$ is onto, therefore $\dim(L^2V/\ker(\pi))=1$.  As
  $\dim(L^2V)=\binom{d}{2}>1$ for $d>2$, we see that $L^2V$ is reducible as claimed.

(ii) Suppose that $H$ preserves the symmetric form $\beta\colon V\times V\to\F_q$ up to non-zero
  scalar multiples.  Since $p$ is odd, $H$ acts irreducibly on $A^2V$, see \cite[Table 1]{mL}.
  Define $\pi\colon T^3V \rightarrow V \otimes \F_q$ by $\pi(u \otimes v \otimes w)=u\otimes
  \beta(v,w)$.  Since $H$ preserves $\beta$ up to scalars, we see that $\pi$ is an $H$-module
  homomorphism. Moreover, since
\[
  u\wedge v \wedge w = u \otimes v \otimes w - u \otimes w \otimes v +
  v \otimes w \otimes u - v \otimes u \otimes w + w \otimes u \otimes
  v - w \otimes v \otimes u
\]
we have
\[
  \pi (u\wedge v \wedge w)= u\otimes (\beta(v,w)-\beta(w,v) ) + v
  \otimes (\beta(w,u) - \beta(u,w)) + w \otimes
  (\beta(u,v)-\beta(v,u)).
\]
Thus $\pi(A^3V)=\{0\}$ since $\beta$ is symmetric. Now choose vectors $u$, $v$ and $w$ of $V$ so
that $u\otimes v \otimes w$ is a fundamental tensor and such that $f(u,w) = 0 $ and $\beta(v,w) \neq
0$ (such a choice is always possible since $\beta$ is non-degenerate). Then $x:=u \otimes v \otimes
w - v\otimes u \otimes w \in A^2V \otimes V$ and $\pi(x) = u \otimes \beta(v,w) \neq 0$. Hence
\[
  A^3V \le\ker(\pi) \cap (A^2V \otimes V) < A^2V \otimes V
\]
 and the quotient $ (A^2V \otimes V) / (\ker(\pi) \cap (A^2V \otimes V) )$ is isomorphic to a
 submodule of $V\otimes \F_q$. Since the latter is an irreducible $H$-module, we have that the
 smallest quotient module of $L^3V$ has dimension $d$ or $1$.
\end{proof}

\begin{remark}
We do not consider the case when $H$ is a maximal subgroup of $\GL(d,p)$ containing $\SL(d,p)$. In
this case the irreducible $\GL(V)$-submodules of $L^nV$ with $p>n$, are likely to restrict to
irreducible $\SL(V)$-modules.  In the case $d=2$ excluded in Lemma~\ref{L:C8}, $H$ contains
$\textup{Sp}(2,p)=\SL(2,p)$.
\end{remark}

\section{Proof of the main theorem}
\label{S7}

\begin{table}[!ht]
\renewcommand{\arraystretch}{1.8} 
\caption{The exponent-$p$ groups $G$ of class $n$ in
  Theorem~\ref{T:main} for different Aschbacher classes $\C_i$ where
  $|G|=p^m$ and $m=\sum_{i=1}^{n-1}f(d,i)+\dim(G_{n-1})$.}
\label{Tab2}\kern-2mm
\begin{center}\def\kk{\kern-1.5pt}
\begin{tabular}{|c|c|c|c|c|c|c|c|} \hline
$\C_i$&$V\kern-1pt=\kern-1ptG_0/G_1$&$H$&\textup{conditions}&$n$&$p\kern-3pt\ge$&$\dim(G_{n-1})$&$G_{n-1}$\\ \hline
$\C_1$&$0<U<V$&$\GL(V)_U$&$1\kk<\kk r\kk<\kk d-1$&2&3&$\binom{d-r}{2}$&$A^2(V/U)$\\
 &  &$r:=\dim(U)$&$1\kk<\kk r\kk=\kk  d-1$&2&3&$r$&$U\otimes(V/U)$\\
 & &  &$(d,r)=(2,1)$&3&5&$1$&$A^2V\otimes (V/U)$\\ \hline
$\C_2$&$\bigoplus_{i=1}^rV_i$&$\GL(V_1)\wr\sym_r$&$1<r<d$&2&3&$\binom{d/r}{2}r$&$U_1$\\
 & &$d=r\dim(V_1)$&$4<r=d$&3&5&$d(d-1)$&$W_1$\\
 & & &$3,4=r=d$&3&5&$2\binom{d}{3}$&$W_2/A^3V$\\
 & & &$2=r=d$&4&5&$1$&\textup{Lemma \ref{L:C2}}\\ \hline
$\C_3$&$(\F_{p^r})^{d/r}$&$\Gamma\mathrm L(d/r,\F_{p^r})$&$1<r<d$&2&3&$\le\binom{d/r}{2}r$&\textup{Lemma \ref{L:C3}(i)}\\
 & & &$3<r=d$&2&3&$d$&\textup{Lemma \ref{L:C3}(ii)}\\
 & & &$3=r=d$&3&5&$\le 3$&\textup{Lemma \ref{L:C3}(iii)}\\
 & & &$2=r=d$&4&5&$\le2$&\textup{Lemma \ref{L:C3}(iv)}\\ \hline
$\C_4$&$V_1\otimes V_2$&$\GL(V_1)\circ\GL(V_2)$&$1<d_1<d_2$&2&3&$\binom{d_1}{2}\binom{d_2+1}{2}$&$A^2V_1\otimes S^2V_2$ \\
 & &$d_i:= \dim(V_i)$&$d=d_1d_2$&&&&\textup{Lemma \ref{L:C4}}\\ \hline
$\C_7$&$\bigotimes_{i=1}^r  V_1$&$\GL(V_1)\wr\sym_r$&$2<r$&2&3&\textup{\ref{T:C7}(i)}&$U_{2\lfloor(r-1)/2\rfloor}$\\
 & &$d=\dim(V_1)^r$&$2=r$&3&5&\textup{\ref{T:C7}(ii)}&$R$\textup{ if $t>2$}\\ \hline
$\C_8$&  &$\textrm{CSp}(\beta)$&$2<d$&2&3&$1$&\textup{det}\\
 &  &$\textrm{GO}(\beta)$ &$2<d$&3&5&$1,d$&\textup{Lemma \ref{L:C8}} \\ \hline
\end{tabular}
\end{center}
\end{table}

In this section we complete the proof of Theorem~\ref{T:main}. In fact, we prove a stronger theorem
from which Theorem~\ref{T:main} follows, after an application of Corollary~\ref{Cor:T:L}.

\begin{theorem}
\label{T:other theorem}
Let $p \geqslant 5$ be a prime, and let $d \geqslant 2$ be an integer. Suppose that $H$ is a maximal
subgroup of $\GL(d,p)$ with $\SL(d,p)\not\le H$ and that $H$ lies in one of the Aschbacher classes
$\C_1 \cup \dots \cup \C_5 \cup \C_7 \cup C_8$.  Then there exists a $d$-generator $p$-group~$G$ of
exponent~$p$, class at most~$4$, order at most $p^{\frac{d^4}{2}}$ and $A(G)=H$.  The nilpotency
class, order and structure of~$G$ is given in Table~{\rm\ref{Tab2}}.

\end{theorem}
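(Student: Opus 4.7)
My plan is to apply Theorem~\ref{T:trick} to reduce the problem to one of linear algebra: for the smallest $n \in \{2,3,4\}$ at which $L^nV$ is reducible as an $H$-module, locate a maximal proper $H$-submodule $W$ of $L^nV \cong B_{n-1}/B_n$, where $B = B(d,p)$. Letting $M$ be the preimage of $W$ in $B_{n-1}$, the chain $B_n \le M < B_{n-1}$ gives $G := B/M$ of class exactly $n$, of exponent $p$ (as a quotient of $B$), and with $A(G) = \mathrm{N}_{\GL(d,p)}(W)$ by Theorem~\ref{T:trick}.

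The point that forces $A(G) = H$ is the interplay between maximality of $H$ and $\GL(V)$-irreducibility of the ambient module $L^nV$: since $W$ is $H$-invariant, $H \le \mathrm{N}_{\GL(d,p)}(W)$, and maximality of $H$ in $\GL(d,p)$ leaves only the alternatives $\mathrm{N}_{\GL(d,p)}(W) = H$ or $\mathrm{N}_{\GL(d,p)}(W) = \GL(d,p)$. The second alternative would force $W$ to be a $\GL(V)$-submodule of $L^nV$; but $L^nV$ is $\GL(V)$-irreducible (standard for $n=2$, by Lemma~\ref{L4}(ii) for $n=3$, and by Lemma~\ref{L^4V} for $n=4$ with $d=2$, which covers all $n=4$ rows of Table~\ref{Tab2}), so the only candidates are $W \in \{0, L^nV\}$, both excluded since $W$ is a proper non-zero $H$-submodule. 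Hence $A(G) = H$.

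To fill in the remaining columns of Table~\ref{Tab2} explicitly, I would work class by class, reading off $n$ and the structure of $G_{n-1}$ from Lemmas~\ref{L:C1}, \ref{L:C2}, \ref{L:C3}, \ref{L:C4}, Theorem~\ref{T:C7}, and Lemma~\ref{L:C8}. Each such result identifies, or in some subcases of $\C_3$ upper-bounds, the minimal dimension of a quotient $H$-module of $L^nV$; the class $\C_5$ is vacuous here, since $\GL(d,p)$ has no proper subfield subgroups. The order bound follows from Lemma~\ref{L3}: $|G| = p^{m}$ where $m = \sum_{i=1}^{n-1} f(d,i) + \dim(G_{n-1}) < \sum_{i=1}^{n} f(d,i)$, the strict inequality coming from $W$ being proper. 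A direct calculation gives
\[
  \sum_{i=1}^{4} f(d,i) = \frac{d^4}{4} + \frac{d^3}{3} + \frac{d^2}{4} + \frac{d}{6} \le \frac{d^4}{2}
\]
for all $d \ge 2$, with equality only at $d=2$; the slack there is absorbed by the strict inequality $\dim(G_{n-1}) < f(d,n)$, yielding $m \le d^4/2$ in every case.

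The main obstacle I anticipate is the fiddly enumeration of small subcases where $L^nV$ for the naive $n$ happens to be $H$-irreducible and one must jump to $L^{n+1}V$: this happens in the $(d,r)=(2,1)$ row of $\C_1$ and the $r = d$ rows of $\C_2$, $\C_3$, $\C_7$. In these cases the tensor-power descriptions in Lemma~\ref{L4}(iii) and Lemma~\ref{L^4V} supply the required reducibility. Since the maximal proper $H$-submodule $W$ need not be unique, to minimise $|G|$ one always selects $W$ of the largest admissible dimension, so that $G_{n-1} \cong L^nV/W$ is a smallest non-zero quotient $H$-module, exactly as listed in Table~\ref{Tab2}.
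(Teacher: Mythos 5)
Your proposal is correct and follows essentially the same route as the paper: reduce via Theorem~\ref{T:trick} to finding a maximal $H$-submodule of $L^nV\cong B_{n-1}/B_n$ at the first $H$-reducible Lie power, use maximality of $H$ together with the $\GL(V)$-irreducibility of $L^nV$ (Lemmas~\ref{L4} and~\ref{L^4V}) to force $\mathrm{N}_{\GL(d,p)}(M/B_n)=H$, and read off class, structure and order from the class-by-class lemmas and Witt's formula. Your explicit verification of $\sum_{i=1}^{4}f(d,i)\le \frac{d^4}{2}$ merely makes precise what the paper leaves as an easy consequence of Theorem~\ref{T1}.
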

\begin{proof}
Let $H$ be as in the statement of the theorem and let $V=\F_p^d$. Note that $H$ cannot be in class
$\C_5$ and cannot be in class $\C_8$ preserving a unitary form. We seek a $d$-generator $p$-group
$G$ of exponent~$p$ and minimal class such that $A(G)=H$. Now $\GL(V)$ (and hence $H$) acts on the
sections of the lower exponent-$p$ central series of the $d$-generator Burnside group~$B=B(d,p)$.
By Lemmas~\ref{L:C1}, \ref{L:C2}, \ref{L:C3}, \ref{L:C4}, \ref{L:C8} and Theorem~\ref{T:C7} there
exists an $n\le 4$ such that $H$ acts irreducibly on $L^1 V ,\dots, L^{n-1} V $ (with the exception
that if $H$ is of class $\mathcal C_1$ then $H$ is reducible on $L^1V$), and there is a maximal
$H$-submodule, say $M/B_n$, of $B_{n-1}/B_n\cong L^nV$ which is not $\GL(V)$-invariant. Set
$G:=B/M$. We claim that $G$ is the desired $p$-group.

Since $B_n < M<B_{n-1}$ is $H$-invariant, $G$ is a proper quotient of the finite group
$\Gamma_n(V)=\Gamma(d,p,n)$. In particular, $G$ has class $n$. Now
$H\le\textup{N}_{\GL(V)}(M/B_n)\le\GL(V)$ and since $H$ is maximal in $\GL(V)$, our choice of $M$
gives $\textup{N}_{\GL(V)}(M/B_n) =H $.  Hence Theorem ~\ref{T:trick} gives
$A(G)=\textup{N}_{\GL(V)}(M/B_n)=H$.

It remains to bound $|G|$.  By construction, $G$ is a quotient of $\Gamma(d,p,n)$, and the order of
the latter group is given in Theorem~\ref{T1}. From this it easily follows that $|G| \le
p^{\frac{d^4}{2}}$ as claimed.
\end{proof}

\begin{remark}
\label{rem:category}
For a given $H \le \GL(d,p)$, we let $\mathcal G(H)$ be the category of all finite $d$-generator
$p$-groups $P$ with $A(P)=H$. Then the group $G$ appearing in Theorem~\ref{T:other theorem} is the
minimal element of $\mathcal G(H)$ with respect to order and nilpotency class. In fact, if $H\in
\C_1 \cup \C_2 \cup\C_4 \cup \C_7$ or $H$ is a $\C_8$ subgroup preserving a symplectic form, we have
also found the groups in $\mathcal G(H)$ of minimal order.
\end{remark}

\begin{remark}
Let $H$ be the $\mathcal C_1$ maximal subgroup $\GL(V)_U$ which fixes a proper non-zero subspace $U$
of $V$. Let $r=\dim(U)$ and let $P=(C_p)^r \times (C_{p^2})^{d-r}$. Then $P$ is abelian and of
exponent~$p^2$, and it is easy to check that $A(P)=H$.  The group $P$ has smaller order than the
corresponding group listed in Table~\ref{Tab2}, but the exponent is $p^2$ rather than $p$.
\end{remark}

\section{Some open questions}
\label{S8}

Aschbacher's Theorem~\cite[Theorem 1.2.1]{KL} and the results of Sections~\ref{S3}, \ref{S4},
\ref{S5} work over an arbitrary finite field~$\F_q$. There is no definition of `$q$-groups' where
$q=p^f$ and $f>1$. However, taking a group $\Gamma_n(\F_q^d)$ defined in Construction~\ref{const:lie
  tuple} results in a group that has a Frattini quotient isomorphic to $\F_q^d$.  Unfortunately,
these groups are not relatively free since they are $df$-generator groups and the lower central
series of $\Gamma_n(\F_q^d)$ is not the same as that of $\Gamma_n(\F_p^{df})$.

How must our results be modified when $p=2$?  How large must the nilpotency class of~$G$ be in the
cases $\C_6$ and $\C_9$ which contain no `large' subgroups?  How do the multiplication
rules~\eqref{E2.1}--\eqref{E2.3} for the universal groups $\Gamma_n(\F^d)$ generalise for $n>4$? To
what extent can collection in groups of exponent~$p$ given by pc-presentations be replaced by
\emph{symbolic} computations in Lie $n$-tuple groups? (This type of question is explored
in~\cite{LGS}, for example.)

Suppose that $H$ is a maximal subgroup of $\GL(V)$ and the irreducible $\GL(V)$-submodules of $L^1 V
,\dots,L^{n-1} V $ restrict to irreducible $H$-submodules, and $n$ is maximal with this
property. Our examples lead us to ask: Is $L^nV$, viewed as an $H$-module, always either completely
reducible or uniserial?

\medskip

\noindent\textsc{Acknowledgements.}  The first author acknowledges the support of the Australian
Research Council Future Fellowship FT120100036, the second and fourth authors acknowledge the
support of the Australian Research Council Discovery Grant DP140100416 and the third author
acknowledges the ARC grants DP120100446 and DE160100081.  We would like to thank Kay Magaard, Csaba
Schneider and Oihana Garaialde Ocan\~{a} for helpful comments, and Saul Freedman for carefully
reading the manuscript.

\end{document}